\documentclass[11pt]{amsart}
\usepackage{amsmath,amssymb,latexsym,soul,cite,amsthm,color,enumitem,graphicx,tikz, mathtools}
\usepackage[expansion=false]{microtype}

\usepackage{float}

\usepackage[colorlinks=true, urlcolor=blue, citecolor=red, linkcolor=blue]{hyperref}
\usepackage[english]{babel}
\usepackage[left=2.5cm,right=2.5cm,top=2.7cm,bottom=2.7cm]{geometry}

\numberwithin{equation}{section}

\newtheorem{theorem}{Theorem}[section]
\theoremstyle{plain}
\newtheorem{lemma}[theorem]{Lemma}
\theoremstyle{plain}
\newtheorem{proposition}[theorem]{Proposition}
\theoremstyle{plain}
\newtheorem{corollary}[theorem]{Corollary}

\theoremstyle{definition}
\newtheorem{remark}[theorem]{Remark}

\newcommand{\N}{{\mathbb N}}

\newcommand{\R}{{\mathbb R}}

\newcommand{\beq}{\begin{equation}}
\newcommand{\eeq}{\end{equation}}

\renewcommand{\ge}{\geqslant}

\usepackage{amsmath}
\def\Xint#1{\mathchoice
   {\XXint\displaystyle\textstyle{#1}}%
   {\XXint\textstyle\scriptstyle{#1}}%
   {\XXint\scriptstyle\scriptscriptstyle{#1}}%
   {\XXint\scriptscriptstyle\scriptscriptstyle{#1}}%
   \!\int}
\def\XXint#1#2#3{{\setbox0=\hbox{$#1{#2#3}{\int}$ }
    \vcenter{\hbox{$#2#3$ }}\kern-.6\wd0}}

\def\dashint{\Xint-} % This defines \dashint

\def\Q{\mathcal{Q}}
\def\K{\mathcal{K}}

 \def\B{\mathcal{B}}

\def\P{\mathcal{P}}

\def\S{\mathcal{S}}

\title[Anisotropic Trudinger equation]{The Weak Harnack inequality and the rigidity  \\ $\quad$\\ of the Anisotropic Trudinger's equation}
\author[S. Ciani \& M. Galeotti \& I.I. Skrypnik]{S. Ciani \& M. Galeotti \& I.I. Skrypnik}

\address{Department of Mathematics of the University of Bologna, Piazza Porta San Donato, 5, 40126 Bologna, Italy}
\email{simone.ciani3@unibo.it {\&} mattia.galeotti4@unibo.it}
\address{Institute of Applied Mathematics and Mechanics, National Academy of Sciences of Ukraine, Gen. Batiouk Str. 19, 84116 Sloviansk, Ukraine}
\email{ihor.skrypnik@gmail.com}
\begin{document}
%%%%%%%%%%%%%%%%%%%%%%%%%%%%%%%%%%%%%%%%%%%%%%%%%%%
%%%%%%%%%%%%%%%%%%%%%%%%%%%%%%%%%%%%%%%%%%%%
\begin{abstract} 
\noindent We prove a local weak Harnack inequality for nonnegative weak super-solutions to the anisotropic Trudinger equation. As an application, we show a proof of the Harnack inequality that bypasses any sort of Krylov-Safonov covering argument. We further develop an analysis of global sub-potential lower bounds which ultimately leads to the identification of a Barenblatt-type profile.  
\vskip0.2cm 
\noindent
{\bf{MSC 2020:}} 35B53, 35K65, 35K92, 35B65.

%%%%%%%%%%%%%%%%%%%%%%%%%%%%%%%%%%%%%%%%%%%%%%%%%%%
\noindent 
\vskip0.2cm 
\noindent
{\bf{Key Words}}: Anisotropic Trudinger equation, Parabolic Weak Har\-nack estimates, Upper Semicontinuity, Sub Potential Lower bounds, Barenblatt-type profile.\newline

%%%%%%%%%%%%%%%%%%%%%%%%%%%%%%%%%%%%%%%%%%%%%%%%%%%
\end{abstract}

\maketitle

	\begin{center}
		\begin{minipage}{9cm}
			\small
			\tableofcontents
		\end{minipage}
	\end{center}

%%%%%%%%%%%%%%%%%%%%%%%%%%%%%%%%%%%%%%%%%%%%%%%%%%%%%%%%%%%%%%%%%%%%%%%

\section{Introduction}
\subsection{Heuristics and description of the problem} In the early twentieth century, L.S. Leibenson investigated the flow of oil and gas through porous media, motivated by problems arising in the oil fields surrounding the city of Baku. One of his most significant contributions concerns the mathematical description of turbulent gas filtration in porous media. In his pioneering work \cite{Liebenson}, he refers to the  parabolic equation \begin{equation} \label{leibenson} \partial_t u^m =c \Delta_p u , \qquad \text{for} \quad (x,y,z,t) \in \R^3 \times(0,T),\end{equation}
which seems, to the extent of our knowledge, to be the first one to appear in the scientific literature. A detailed historical account of the origins of this equation, together with its physical motivation and subsequent mathematical development, can be found in \cite{Origin}. In the particular model considered by Leibenson, the constitutive parameters satisfy \[m+1=p=3/2\,,\] 
classifying in the modern literature the equation as a $(p-1)$-homogeneous doubly nonlinear diffusion equation. This special kind of parabolic equations (with general exponent $m=p-1>0$) has been addressed also as Trudinger's equation, since the method of J. Moser (see \cite{Moser}), refined by Neil Trudinger to the nonlinear case in \cite{Tru}, could be applied successfully in order to obtain a Harnack estimate. The problem has been then studied on the wider perspective of doubly nonlinear equations in \cite{Kalashnikov}, and since then has received a huge impetus in this research field. We refrain here from describing the subject as a whole, that would drive us out of the railway of our presentation. In the model described by Leibenson, he prescribed the same diffusion along the three coordinates $(x,y,z)$, while for some materials the diffusion should have different power-laws for different directions, that tangle in a competitive effort as a sum of monotone operators (see \cite{Ant-Sh}, \cite{Lions}). The description of the regularity properties of solutions of such type of operators is precisely the scope of the present work. Here we continue the study of \cite{CHSS}, addressing anisotropic operators that, similarly to \eqref{leibenson}, enjoy a special homogeneity that balances the energetic contributions in time and space as
\begin{equation} \label{prototype} \partial_t u-\sum\limits_{i=1}^N \partial_i\Big(u^{2-p_i}|\partial_i u|^{p_i-2} \partial_i u\Big)=0,\quad u\geqslant 0.\end{equation} More precisely, let $\Omega\subseteq \mathbb{R}^{N}$ be open, $T>0$ and consider numbers $\{p_i\}_{i=1}^N$ such that\vskip0.3cm \noindent 
\[ 1<p_1:=\min\limits_{1\leqslant i\leqslant N}p_i\leq p_i \leq \max\limits_{1\leqslant i\leqslant N}p_i=:p_N\, .\]
Let also $p$ be the harmonic average of the $\{p_i\}_{i=1}^N$, and let us assume
\[ p:=\bigg( \frac{1}{N}\sum\limits_{i=1}^N\frac{1}{p_i}\bigg)^{-1}<N\,.\]
\noindent Observe that when $p_i=X$ for all $i=1,\dots, N$ then the harmonic average coincides with $X$, i.e. $p=X$; but also that, equation \eqref{prototype} is not exactly fitting with \eqref{leibenson}, since it is {\it orthotropic}, i.e.
\[\partial_t u-\sum\limits_{i=1}^N \partial_i\Big(u^{2-p}|\partial_i u|^{p-2} \partial_i u\Big)=0\,\]
shows a modulus of ellipticity that blows up when $p<2$ anytime that a single partial derivative of $u$ vanishes. Nevertheless, the contribute of $u$ vanishing itself may lead to re-balance the anisotropic competition effect. An essential point in this framework would be to understand how to remove the parabolic zero set of the solution, see for instance \cite{Rado1}, \cite{Rado2}, \cite{Rado3}; or parabolic anisotropic rigidity properties as for instance Liouville-type properties (see \cite{CG}) or growth-constraints as in   \cite{Friedman}, \cite{Glagoleva1}, \cite{Glagoleva2}. The crucial point of these being, nevertheless, a weak Harnack-type inequality. We refer to {\it the data} as the set $\{N, p_i\}$.

\subsection{Main Results} Our first result is a parabolic weak Harnack estimate, valid for nonnegative super-solutions to \eqref{prototype}. For the geometric notation we refer to Section \ref{Notation}.
\begin{theorem} \label{thm-WH}
Let us assume $Q_{\rho}= \K_{4\rho}(y) \times (s,\, s+(4\rho)^p] \subset \Omega_T$ for some $\rho>0$. If $u\ge 0$ is a local weak super-solution of \eqref{prototype} in $Q_{\rho}$, then there exist numbers $\delta_o, q \in (0,1)$ and $\gamma>1$ depending only on the data such that 

\begin{equation}\label{weak-harnack}
\bigg( \dashint_{\K_{\rho}(y)} u^q (x,s)\, dx \bigg)^{1/q} \leq \gamma \inf_{\K_{2\rho}(y)} u(\cdot, \, t),    
\end{equation} for all times
\begin{equation}
    \label{10}
    s+ \frac{\delta_o}{2} \rho^p < t < s+ \delta_o \rho^p\,.
\end{equation}
\end{theorem}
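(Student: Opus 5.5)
The plan is to follow the DiBenedetto–Gianazza–Vespri route to weak Harnack for homogeneous doubly nonlinear equations, adapted to the anisotropic geometry $\K_\rho$ (cubes of sides $\rho^{p/p_i}$ in direction $i$, as in Section \ref{Notation}). The key structural fact is that \eqref{prototype} is invariant under $u\mapsto \lambda u$. Because of this, no intrinsic time scaling depending on the level is needed, and the natural cylinders are simply $\K_\rho\times(0,\rho^p]$. The proof has three steps: an \emph{expansion of positivity} lemma, a \emph{measure-to-pointwise} conversion via the layer-cake formula, and the choice of $q$ small.

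\textbf{Step 1 (expansion of positivity).} Suppose
\[
|\{u(\cdot,s)\ge M\}\cap\K_\rho(y)|\ge \alpha|\K_\rho|.
\]
I would show that there exist $\eta(\alpha),\delta_o\in(0,1)$, depending on the data and $\alpha$, such that
\[
u\ge \eta M \quad \text{in } \K_{2\rho}(y)\times(s+\tfrac{\delta_o}{2}\rho^p,\,s+\delta_o\rho^p].
\]
Moreover I want $\eta(\alpha)\ge \bar\eta\,\alpha^{b}$ for some power $b>0$ depending only on the data. This step has three ingredients.
\begin{enumerate}
\item[(a)] A time-propagation of the measure information over a short interval of length $\delta\rho^p$. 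I would prove it with energy estimates tested by $(u-k)_-$, with $k=M$, using the weight $u^{2-p_i}$ comparable to $k^{2-p_i}$ on the relevant set. Alternatively, I would work with $v=\ln u$ or $u^{\epsilon}$-type test functions, which are natural for Trudinger's homogeneity.
\item[(b)] A shrinking lemma (De Giorgi isoperimetric type) using the anisotropic Sobolev/Poincaré inequality coordinatewise. It reduces the measure of $\{u<\epsilon M\}$ in a larger cylinder to an arbitrarily small fraction.
\item[(c)] A critical-mass De Giorgi lemma with anisotropic cylinders. It converts the smallness from (b) into a pointwise lower bound.
\end{enumerate}
Since there is no intrinsic scaling, the polynomial dependence of $\eta$ on $\alpha$ can be tracked. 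Alternatively, I would obtain it through the standard change of variables $w=\ln(M/u)$ in the log-estimate: the equation for $u^{p/(p-1)}$-type quantities yields a supersolution of a nondegenerate problem, and there the dependence is power-like.

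\textbf{Step 2 (from measure to integral).} Set $I=\inf_{\K_{2\rho}\times(s+\delta_o\rho^p/2,\,s+\delta_o\rho^p)}u$. For each $M>0$, Step 1 applied with
\[
\alpha(M)=\frac{|\{u(\cdot,s)>M\}\cap\K_\rho|}{|\K_\rho|}
\]
gives $I\ge \bar\eta\,\alpha(M)^{b}M$. Hence $\alpha(M)\le (I/(\bar\eta M))^{1/b}$. Then by the layer-cake formula,
\[
\dashint_{\K_\rho}u^q(x,s)\,dx=q\int_0^\infty M^{q-1}\alpha(M)\,dM\le I^q+q\int_I^\infty M^{q-1}\Big(\frac{I}{\bar\eta M}\Big)^{1/b}dM .
\]
The last integral converges once $q<1/b$, and this yields \eqref{weak-harnack} with $\gamma$ depending only on the data. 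The window of times \eqref{10} is exactly the one produced in Step 1, uniformly in $\alpha$. This uniformity is essential, so $\delta_o$ must not depend on $\alpha$.

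\textbf{Main obstacle.} The hard part is Step 1, specifically getting $\delta_o$ independent of $\alpha$ while keeping $\eta$ power-like in $\alpha$. Two features of the anisotropic setting complicate it: the energy estimates carry different powers $k^{p_i}$ in each direction, and the degenerate/singular weights $u^{2-p_i}$ act differently depending on whether $p_i<2$ or $p_i>2$. My first attempt would be to exploit homogeneity: normalize $M=1$ and write the log-energy estimate for $\ln u$, where the weights cancel exactly, since
\[
u^{2-p_i}|\partial_iu|^{p_i-2}\partial_iu\cdot \partial_i(u^{-1})=-|\partial_i\ln u|^{p_i}.
\]
This gives a BMO-type/Caccioppoli bound for $\ln u$ on anisotropic cubes uniformly in $\alpha$. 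I would then run the De Giorgi iteration on $\ln u$ to get the power dependence, in the spirit of Moser's and Trudinger's original argument.
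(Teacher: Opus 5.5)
\textbf{There is a genuine gap in Step 1.} Your Step 2 is the paper's layer-cake argument, and you correctly isolate what it needs: a lower bound $\eta(\alpha)\ge\bar\eta\,\alpha^{b}$ on a time window independent of $\alpha$. Step 1, however, only asserts this, and the tools you propose cannot deliver it.

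\emph{The De Giorgi chain gives exponential dependence.} The chain (a)--(c) (time propagation, De Giorgi shrinking, critical mass) is essentially how Lemma \ref{EP} is proved. The paper points out that this route does not give power-like dependence. In the shrinking step, each use of the isoperimetric inequality costs a factor $\alpha^{-1}$, coming from $|[u(\cdot,t)>\varepsilon M]\cap\K_\rho|\gtrsim\alpha|\K_\rho|$. After $j_*$ halvings of the level you only get $|[u<2^{-j_*}\varepsilon M]\cap Q|\lesssim\alpha^{-1}j_*^{-c}|Q|$. Beating the data-only critical mass $\nu$ forces $j_*\approx(\alpha\nu)^{-1/c}$, i.e. $\eta(\alpha)\approx\exp(-\gamma\alpha^{-1/c})$. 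The absence of intrinsic scaling only makes the constants independent of the level $M$; it says nothing about the $\alpha$-dependence. So ``the polynomial dependence can be tracked'' is a non sequitur. With exponential dependence Step 2 collapses: $\alpha(M)\lesssim(\ln(M/I))^{-c}$, and $\int_I^\infty M^{q-1}\alpha(M)\,dM=\infty$ for every $q>0$.

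\emph{The logarithmic alternative has the same defect as described.} A Caccioppoli/BMO bound for $\ln u$, combined with the measure information via Chebyshev, gives only $(\ln u)_{\K}\ge \ln M-C/\alpha$, which is again exponential. Getting $\ln(1/\alpha)$ instead requires exponential integrability of $\ln u$: a parabolic John--Nirenberg or Bombieri--Giusti crossover, together with reverse H\"older inequalities for powers of $u$. None of this is in your plan. In the anisotropic setting such a Moser scheme is also not available off the shelf, since the energy only controls $u^{(p_N-p_i)/(p_N-1)}|D_i u^{1/(p_N-1)}|^{p_i}$ with direction-dependent weights.

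\textbf{The missing idea is the local clustering Lemma \ref{lem-local-clustering}.} Write $\alpha_o$ for your $\alpha$. The paper proceeds as follows.
\begin{enumerate}
\item Lemma \ref{lem4.1} transports the measure information forward in time, with genuinely power-like constants $\delta_1\approx\alpha_o^{p_N+1}$ and $\varepsilon_1\approx\alpha_o$.
\item After the rescaling \eqref{eq_cov}, the energy estimate gives the $W^{1,1}$ bound $\iint|\nabla(v^{p_N/(p_N-1)}-1)_-|\,dz\,d\tau\lesssim\alpha_o^{-(p_N+1)/p_1}$.
\item A good time slice $\bar\tau$ then satisfies the hypotheses of the clustering lemma. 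This yields a cube $\K_{\sigma r}(y_c)$, with $\sigma\approx\alpha_o^{2+(p_N+1)/p_1}$, on which $[u(\cdot,\bar t)>2^{-(p_N-1)/p_N}\varepsilon_1 k]$ has density at least $1/2$.
\item Lemma \ref{EP} is then applied only with the fixed density $1/2$, about $n\approx\log_2(L/\sigma)$ times. Each application costs the data-only factor $\eta_p$, so $\eta_p^{n}=(\sigma/L)^{\log_2(1/\eta_p)}$ is a power of $\alpha_o$.
\end{enumerate}
The same construction is what makes the time window essentially independent of $\alpha_o$, which was your other concern. The $\alpha_o$-dependent stage ends by time $\delta_1 r^p$, while the expansion stage lasts a time comparable to $\delta_p(2^n\sigma r)^p\approx\delta_p(Lr)^p$.

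\textbf{A separate error in (a).} The weight $u^{2-p_i}$ is not comparable to $k^{2-p_i}$ on all of $[u<k]$; it degenerates or blows up as $u\to0$. This is why Lemma \ref{lem2.6} is written for $(u^{1/(p_N-1)}-k^{1/(p_N-1)})_\pm$ with weights $u^{(p_N-p_i)/(p_N-1)}$.
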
 \noindent As a first consequence, we show a proof of the full Harnack estimate that bypasses the Krylov-Safonov trick, see Section \ref{sec-Harnack}. Finally, we use the Harnack estimate to prove the following global sub-potential lower bounds.
\begin{theorem} \label{thm-sub}
Let $u$ be a positive local weak solution to \eqref{prototype} in $\R^{N+1}$. There exists a constant $\gamma>0$ depending only on the data, such that for all $(x,t),(y,s) \in \R^{N+1}$ with $t>s$ we have
    \begin{equation}\label{splb-clean}
        u(x,t) \ge u(y,s) \, \, \gamma^{-1} \mathcal{B} (x,t,y,s), 
    \end{equation}
    where $\B$ is the function
    \[ \B(x,t,y,s)= \exp \bigg( -\sum_i \bigg(\frac{|x_i-y_i|^{p_i}}{
    t-s} \bigg)^{\frac{1}{p_i-1}}\bigg)\,.\]
\end{theorem}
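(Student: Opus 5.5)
The plan is to obtain \eqref{splb-clean} by comparison with an explicit separable sub-solution whose Gaussian-type exponent has coefficients $c_i<1$. A Harnack-chain argument cannot give the bound as stated: it only yields an exponent of the form $C\log\gamma$, which cannot be brought down to $1$, and $\mathcal{B}^C\le \mathcal{B}$ goes the wrong way. The structure that makes comparison possible is the logarithmic change of variables. If $u=e^{w}$, then $u^{2-p_i}|\partial_i u|^{p_i-2}\partial_i u=e^{w}|\partial_i w|^{p_i-2}\partial_i w$, and \eqref{prototype} becomes
\[
\partial_t w=\sum_{i=1}^N\Big[\partial_i\big(|\partial_i w|^{p_i-2}\partial_i w\big)+|\partial_i w|^{p_i}\Big].
\]
This equation decouples completely in the coordinates: a sum $w=\sum_i w_i(x_i,t)$ solves it as soon as each $w_i$ solves the corresponding one-dimensional equation.

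\textbf{Step 1 (explicit profile).} In one dimension, with exponent $p_i$, I look for $w_i=-c_i\,|x_i|^{p_i/(p_i-1)}t^{-1/(p_i-1)}-k_i\log t$. The Hamilton--Jacobi part $\partial_t w=|\partial_i w|^{p_i}$ fixes $c_i=(p_i-1)p_i^{-p_i/(p_i-1)}$. The diffusion term $\partial_i(|\partial_i w_i|^{p_i-2}\partial_i w_i)$ is then a negative multiple of $1/t$, and the constant $k_i>0$ is chosen to absorb it exactly. This produces the Barenblatt-type solution
\[
\Gamma(x,t)=t^{-k}\exp\Big(-\sum_i c_i\big(|x_i|^{p_i}/t\big)^{1/(p_i-1)}\Big),\qquad k=\sum_i k_i,
\]
and I would check that $c_i<1$ for every $p_i>1$.

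\textbf{Step 2 (initial positivity and comparison).} By translation and the anisotropic scaling $x_i\mapsto \lambda^{1/p_i}x_i$, $t\mapsto\lambda t$, which leaves \eqref{prototype}, $\mathcal B$ and $\Gamma$ invariant, I may assume $(y,s)=(0,0)$ and $u(0,0)=1$. The weak Harnack inequality (Theorem \ref{thm-WH}) and the full Harnack estimate of Section \ref{sec-Harnack}, applied around $(0,0)$ on the unit scale, give $u\ge\gamma_1^{-1}$ on $\K_1\times[\delta_o/2,\delta_o]$. Let $\varepsilon>0$ be a small time shift, and let $\mu>0$ be small depending on $\gamma_1$ and $\varepsilon$, so that $\mu\,\Gamma(\cdot,\varepsilon)$ is $\le\gamma_1^{-1}$ on $\K_1$ and negligible, though not zero, outside it. I then compare $u(\cdot,\cdot+\delta_o)$ with $\mu\,\Gamma(\cdot,\cdot+\varepsilon)$. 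Since $\Gamma>0$ everywhere, the comparison on all of $\R^N$ needs a truncation: I replace $\Gamma$ by $(\mu\Gamma-\eta)_+$, which is still a sub-solution, and compare on large boxes $\K_R$ where $u\ge0\ge(\mu\Gamma-\eta)_+$ on the lateral boundary. A comparison principle for \eqref{prototype} (Trudinger-type, via the $\log$ variable) is needed here, followed by $\eta\to0$. The outcome is
\[
u(x,t)\ge \mu\,(t+\varepsilon)^{-k}\exp\Big(-\sum_i c_i\big(|x_i|^{p_i}/(t+\varepsilon)\big)^{1/(p_i-1)}\Big)\qquad\text{for }t\ge\delta_o .
\]

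\textbf{Step 3 (removing the polynomial factor).} Set $Z=\sum_i(|x_i|^{p_i}/t)^{1/(p_i-1)}$. Because $c_i<1$, there is a gap $e^{-(c-1)Z}$ with $\max_i c_i<1$, and I must show $\mu (t+\varepsilon)^{-k}e^{-cZ}\ge\gamma^{-1}e^{-Z}$ in every regime. Two regimes are easy: if $Z\ge C_o\log(2+t)$, the gap beats $t^{-k}$; if $Z$ is bounded and $t\sim1$, the forward Harnack inequality gives the bound directly. The remaining regime, $t\gg1$ with $Z\lesssim\log t$, is the one to handle by rescaling. I rescale around $(0,0)$ at scale $\lambda=t/\delta_o$, so that the target point sits at time $\delta_o$. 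The Harnack inequality then propagates $u(0,0)=1$ to $u\ge\gamma_1^{-1}$ on the rescaled cube $\K_{\lambda}$ at a time comparable to $t$, with no loss depending on $\lambda$, since both Harnack estimates are scale invariant. Repeating Step 2 from this rescaled datum removes the $t^{-k}$ loss.

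\textbf{Main obstacle.} I expect the main obstacle to be making Steps 2--3 rigorous: a comparison principle for merely weak solutions of \eqref{prototype} on unbounded domains, and keeping the constant $\gamma$ uniform while matching the two regimes. The exponent coefficient in $\mathcal B$ is exactly $1$, and no constant $C$ may be absorbed there. So every loss has to be routed into the multiplicative constant $\gamma^{-1}$, using the slack $c_i<1$ from Step 1.
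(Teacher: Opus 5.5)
The paper proves \eqref{splb-clean} by the Harnack chain you set aside. It places $n$ equally spaced points on the segment joining the two points, applies \eqref{HI} on each step, and takes $\rho=\infty$ in \eqref{n}. Your objection to that route is accurate. Tracking constants, the chain yields
\[
u(x,t)\ge \gamma^{-1}u(y,s)\exp\Big(-\ln C\sum_i\theta^{\frac{1}{p_i-1}}\Big(\frac{|x_i-y_i|^{p_i}}{t-s}\Big)^{\frac{1}{p_i-1}}\Big),
\]
that is, \eqref{splb-clean} with $\B$ replaced by $\B^{\gamma}$. The passage from \eqref{n} to \eqref{sblb-monster} drops the factor $\ln C\,\theta^{1/(p_i-1)}$ without comment. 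For this weaker form the chain is the right proof, since it uses only \eqref{HI} and so also covers the structures of Section~\ref{generalization}, where comparison may fail.

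Your Step 1 is correct: $c_i=(p_i-1)p_i^{-p_i/(p_i-1)}<1$ and $k_i=1/p_i$, hence $k=N/p$, in agreement with \eqref{eq_Bsol}. The gap is in Step 2. First, $(\mu\Gamma-\eta)_+$ is not a sub-solution in general, because \eqref{prototype} is invariant under $u\mapsto\lambda u$ but not under $u\mapsto u-\eta$. With $\phi=\mu\Gamma$ and $v=\phi-\eta$,
\[
\partial_i\big(v^{2-p_i}|\partial_iv|^{p_i-2}\partial_iv\big)=(2-p_i)\,v^{1-p_i}|\partial_i\phi|^{p_i}+v^{2-p_i}\,\partial_i\big(|\partial_i\phi|^{p_i-2}\partial_i\phi\big),
\]
while $\partial_tv=\partial_t\phi$ stays bounded. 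Suppose $p_N>2$ and consider points of $\partial[\phi>\eta]$ with $x_N\neq0$. As $v\downarrow0$ there, the terms with $p_i=p_N$ tend to $-\infty$ like $v^{1-p_N}$, faster than every other term. So the sub-solution inequality fails near the edge of the support, and the truncation that works for the heat equation does not transfer.

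Second, and more fundamentally, no truncation at a fixed level can produce the tail. At the initial time you only know $u\ge\gamma_1^{-1}$ on $\K_1$, so the initial ordering forces $[\mu\Gamma(\cdot,\varepsilon)>\eta]\subset\K_1$, i.e.\ $\eta\ge\mu\sup_{\R^N\setminus\K_1}\Gamma(\cdot,\varepsilon)>0$. The limit $\eta\to0$ is therefore not available. With $\eta$ bounded below, the conclusion $(\mu\Gamma-\eta)_+$ vanishes once $Z$ is large. That is exactly the regime where \eqref{splb-clean} says more than \eqref{HI}. Your case $Z\ge C_o\log(2+t)$ in Step 3 uses the untruncated bound, which was never obtained.

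To get the Gaussian tail by comparison, you would have to compare $u$ with the untruncated $\mu\Gamma$ on all of $\R^N$. The initial ordering would have to come from a global lower bound, for instance the chain estimate at a small time, with $\varepsilon$ so small that $\mu\Gamma(\cdot,\varepsilon)$ decays faster. This requires a comparison principle for weak solutions of \eqref{prototype} on the whole space, in a suitable class. That ingredient is neither in your sketch nor in the paper.

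By contrast, the polynomial factor is harmless. The scaling you already use lets you normalize $t-s=1$, after which $(t+\varepsilon)^{-k}$ is a constant, so the regime splitting of Step 3 is unnecessary.
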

\noindent As a simple application, Theorem \ref{thm-sub} suggests a natural guess for the construction of a special self-similar solution. Indeed, yet Trudinger's equation (see \cite{S}) \begin{equation} \label{Trudy} (|u|^{p-2}u)_t=\text{div}(|Du|^{p-2}Du),\ 1<p<\infty\,, \end{equation} 
\noindent has a corresponding "fundamental solution", which is nowadays called Barenblatt's solution since its discovery in \cite{Barenblatt}, which is 
\begin{equation} \label{funda-iso}
\mathcal{B}_p(x,t)=\frac{1}{t^{N/p}}\exp \left(-\xi \left(\frac{|x|}{t^{1/p}}\right)^\frac{p}{p-1}\right),\end{equation}
\noindent with $\xi =(p-1)^2 p^{-\frac{p}{p-1}}.$ Here we show that, similarly to the solutions to \eqref{Trudy}, positive local weak solutions to \eqref{prototype} are bounded from below by a function that reflects the anisotropic nature of the equation, and that indeed can be manipulated to be a subsolution, see \eqref{eq_Bsol}. We believe that this strategy can be useful in order to construct subsolutions to more general anisotropic operators (see for instance  \cite{JEE}, \cite{impact}, \cite{CMV}).

\subsection{Novelty and Significance}
In the isotropic framework ($p_i\equiv p$), a proof of the weak Harnack inequality can be found in \cite{KK}, or adapted from \cite{Liao2} (see also \cite{GiaVes} and \cite{Tru}). On the other hand, the anisotropic context needs a careful analysis of the anisotropic energy (see \eqref{3}). For equations with non-standard growth, Theorem \ref{thm-WH} seems to be new. For what concerns the point-wise estimates of Theorem \ref{thm-sub}, since for doubly nonlinear equations the Harnack inequality and the H\"older continuity are two independent issues, it is not known if local weak solutions to \eqref{prototype} are continuous. In the isotropic case some answers have been given in \cite{KK2}, \cite{K1}, \cite{K2} and in \cite{VerLia} for sign-changing solutions, since in the case where $u>\eta >0$ the equation behaves, roughly speaking, as the $p$-Laplacean equation - whose solutions are nowadays known to be continuous. In the case of equation \eqref{prototype}, only a partial answer is given in \cite{CHSS} through the technique of \cite{JEE} that restricts qualitatively the sparseness of the exponents $p_i$s. The crucial point is that it is not known if bounded solutions to the anisotropic $p$-Laplacian 
\[\partial_t u - \sum_i \partial_i (|\partial_i u|^{p_i-2}\partial_i u)=0\]
are continuous. Hence, the trick does not apply. For this reason and other technical purposes, we show that local sub-solutions have an upper-semicontinuous representative, see Section \ref{sec-semi}. Finally, the kind of sub-potential lower bounds we establish here (Theorem \ref{thm-sub}) are derived through a Harnack chain procedure reminiscent of \cite{DBGV-mono}. The difference lies on the fact that the procedure has to be performed on each separate direction at each step.

\subsection*{Structure of the paper}
In Section \ref{Notation} we collect the notation used throughout the paper, while Section \ref{Preliminaries} is devoted to set up the functional framework and to recall some measure-theoretical properties of the solutions which are necessary to our study, such as energy estimates, measure propagation in time, and expansion of positivity. In Section \ref{WHSection} we prove the weak Harnack inequality and its use for a new proof of the Harnack inequality. Finally, Section \ref{Section-Sub-Potential} is devoted to point-wise sub potential bounds and the construction of a model solution, while the brief Section \ref{generalization} explains the extent of our method.

\section{Notation} \label{Notation} \subsection{Indexes} We assume that the numbers $\{p_1,p_2, \dots, p_N\}$ are fixed. In order to ease notation we further assume, without loss of generality, that 
\[1<p_1 \leq p_2 \leq \dots \leq p_N< \infty\] and we set
\[\frac{1}{p}:=\frac{1}{N}\sum\limits_{i=1}^N\frac{1}{p_i}\,.\]
When index $i$ is concerned, we will always reduce the set of indexes as in the previous formula, also for the product $\prod_{i=1}^N= \prod_i$. Given $K_1,K_2$ by the structure conditions \eqref{eq1.2}, we will refer to the set of parameters $\{N, p_1, \dots, p_N, K_1,K_2\}$ as the set of (structural) {{\it data}, and by saying that a constant $\gamma$ depends only on the data we mean that it can be quantitatively determined {\it a priori} only in terms of the above quantities. As usual, the generic constant $\gamma$ may change from line to line. 
\subsection{Geometry} For fixed $r, \eta >0$ and a point $(\bar{x},\bar{t}) \in \R^{N+1}$, we define the standard cube
\[K_r(\bar{x})= \big\{x\in \R^N:\quad |x_i-\bar{x}_i|<r, \quad i=1, \dots, N \big\} \ , \]
the anisotropic cubes
\[\K_{r}(\bar{x}):=\big\{x\in \R^N:\quad |x_i-\bar{x}_i|<r^{\frac{p}{p_i}},\quad i=1, ...,N\big\},\]
and the cylinders
\[Q_{r, \eta}(\bar{x}, \bar{t}):=\K_{r}(\bar{x})\times (\bar{t}-\eta, \bar{t}).\]
Finally, for fixed $k,r,\eta>0$, we define the intrinsic anisotropic cubes 
\[\K^k_r(\bar{x}):=\big\{x\in \R^N:\quad  |x_i-\bar{x}_i|<r^{\frac{p}{p_i}} k^{-\frac{p_N-p_i}{p_i}},\quad i=1, ..., N\big\} \ ,\]
and cylinders
\[Q^k_{r, \eta}(\bar{x}, \bar{t}):=\K^k_r(\bar{x})\times (\bar{t}-\eta, \bar{t}).\]

\section{Functional Setting and Auxiliary Results}\label{Preliminaries}

\subsection{Functional Anisotropic Setting and Parabolic Embeddings}
\noindent Equation \eqref{prototype} has to be understood in an appropriate variational setting. For this aim, we define the anisotropic spaces of locally integrable functions as
$$W^{1,\textbf{p}}(\Omega)=\big\{u\in W^{1,1}(\Omega): D_i u\in L^{p_i}(\Omega),\quad i=1, ...,N\big\},$$
$$L^{\textbf{p}}(0, T; W^{1,\textbf{p}}(\Omega))=\big\{u\in L^1(0, T; W^{1,1}(\Omega)): D_i u\in L^{p_i}(0, T; L^{p_i}(\Omega)),\quad i=1, ...,N\big\},$$
and the respective spaces of functions with zero boundary data
$$W^{1,\textbf{p}}_0(\Omega)=W^{1,1}_0(\Omega)\cap W^{1,\textbf{p}}(\Omega),$$
$$L^{\textbf{p}}(0, T; W^{1,\textbf{p}}_0(\Omega))=L^{1}(0, T; W^{1,1}_0(\Omega))\cap L^{\textbf{p}}(0, T; W^{1,\textbf{p}}(\Omega)).$$

\subsection{Definition of weak solution} \label{def-sol} Here below we give the notion of local weak solution to
equation \eqref{prototype}. For the motivation lying behind this definition and an alternative definition that takes into account an approximated time-derivative, we refer to the Appendix of \cite{CHSS} and \cite{Vestberg}. A measurable function $u : \Omega_T \rightarrow [0, \infty)$ such that
\begin{equation} \label{functional}
\begin{cases} 
u\in C_{loc}(0, T; L^{\frac{p_N}{p_N-1}}_{loc}(\Omega)),\quad u^{\frac{1}{p_N-1}}\in L^{\textbf{p}}_{loc}(0, T; W^{1,\textbf{p}}_{loc}(\Omega)),\\\quad \\
D_i (u^{\frac{1}{p_i-1}} ) \in L^{p_i}_{loc}(0, T; L^{p_i}_{loc}(\Omega)),\,\qquad \text{for all} \quad i=1, ...,N,
\end{cases}
\end{equation}
is a  non-negative, local, weak sub(super)-solution to \eqref{prototype}, if for every compact set $E\subset \Omega$ and every sub-interval $[t_1, t_2]\subset (0, T]$ it satisfies the inequality 
\begin{equation}\label{eq1.3}
\int\limits_E u\,\varphi\,dx \Big|^{t_2}_{t_1}+\int\limits^{t_2}_{t_1}\int\limits_E \Big[u\, \varphi_t+\sum_i A_i(x, t, u, D u)\,D_i \varphi \Big]\,dx dt \leqslant (\geqslant) 0,
\end{equation}
for all non-negative testing functions
$$\varphi\in W^{1,p_N}_{loc}(0, T; L^{p_N}(E))\cap L^{\textbf{p}}_{loc}(0,T; W^{1,\textbf{p}}_0(E)).$$
As usual, we say that $u$ is a non-negative, local weak solution to equation \eqref{prototype} if $u$ is both a non-negative, local weak
sub- and super-solution.

\subsection{Local Energy Estimates}

\begin{lemma}\label{lem2.6}
Let $u$ be a non-negative, local weak sub(super)-solution to  \eqref{prototype}. Then there exists a constant $\gamma >0$, depending only on the data, such that, for every cylinder $Q_{r, \eta}(y, \tau)\subset \Omega_T$, every $k\in \mathbb{R}_+$, and every piecewise smooth, cutoff function $\zeta$ vanishing on $\partial K_{r}(y)$ and such that $0\leqslant \zeta \leqslant 1$, there holds
\begin{multline}\label{energy-estimates}
\sup\limits_{\tau-\eta\leqslant t \leqslant \tau}\int\limits_{K_{r}(y)}g_{\pm}(u^{\frac{1}{p_N-1}}, k^{\frac{1}{p_N-1}})\,\zeta^{p_N}\,dx\, +\\+\gamma^{-1}  \sum\limits_{i=1}^N\iint\limits_{Q_{r,\eta}(y, \tau)}u^{\frac{p_N-p_i}{p_N-1}}|D_i (u^{\frac{1}{p_N-1}}-k^{\frac{1}{p_N-1}})_{\pm}|^{p_i}\zeta^{p_N}\,dx\,dt\\\leqslant \int\limits_{K_{r}(y)\times\{\tau-\eta\}}g_{\pm}(u^{\frac{1}{p_N-1}}, k^{\frac{1}{p_N-1}})\,\,\zeta^{p_N}\,dx+\gamma\iint\limits_{Q_{r,\eta}(y, \tau)} g_{\pm}(u^{\frac{1}{p_N-1}}, k^{\frac{1}{p_N-1}})\,|\zeta_t|\,dx dt\, +\\+\gamma \sum\limits_{i=1}^N\iint\limits_{Q_{r,\eta}(y, \tau)}u^{\frac{p_N-p_i}{p_N-1}}(u^{\frac{1}{p_N-1}}-k^{\frac{1}{p_N-1}})_{\pm}^{p_i}|D_i \zeta|^{p_i}\,dx\,dt\, .
\end{multline} where
$$g_{\pm}(u^m, k^m):=\pm\,\frac{1}{m}\int\limits^{u^m}_{k^m} s^{\frac{1}{m}-1} (s-k^m)_{\pm}\,ds , \qquad k,m>0 \ .$$
\end{lemma}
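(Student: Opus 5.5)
We test the weak formulation \eqref{eq1.3} with
\[
\varphi=\pm\big(u^{\frac{1}{p_N-1}}-k^{\frac{1}{p_N-1}}\big)_\pm\,\zeta^{p_N},
\]
restricted to $(\tau-\eta,t)$ for an arbitrary $t\in(\tau-\eta,\tau]$. Since $u$ is only in $C_{loc}(0,T;L^{p_N/(p_N-1)}_{loc})$ and lacks a time derivative, this test function is admissible only after a regularization in time. We will use Steklov averages, or the exponential mollification in time of \cite{CHSS}, \cite{Vestberg}, and pass to the limit at the end.

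\emph{Parabolic term.} Set $w=u^{\frac{1}{p_N-1}}$ and $m=\frac{1}{p_N-1}$, so that $u=w^{1/m}$. The time term formally reads $\iint \partial_t(w^{1/m})\,(\pm(w-k^m)_\pm)\zeta^{p_N}$. Since
\[
\frac{d}{ds}s^{1/m}=\frac1m\,s^{1/m-1},
\]
this equals $\iint \partial_t\big[g_\pm(w,k^m)\big]\zeta^{p_N}$, where
\[
g_\pm(w,k^m)=\pm\frac1m\int_{k^m}^{w} s^{1/m-1}(s-k^m)_\pm\,ds.
\]
Integrating by parts in time then produces:
\begin{itemize}
\item the term $\int g_\pm\zeta^{p_N}\,dx$ at time $t$;
\item minus the same integral at time $\tau-\eta$;
\item the error $\iint g_\pm\,|\partial_t\zeta^{p_N}|\le p_N\iint g_\pm|\zeta_t|$, using $\zeta\le1$.
\end{itemize}
In the regularized version the identity holds with the mollified $u$. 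Convexity of $s\mapsto s^{1/m}$ gives the usual one-sided inequality, which is enough for sub- and super-solutions respectively. The limit is then taken using the continuity in time in $L^{p_N/(p_N-1)}$.

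\emph{Diffusion term.} Write $A_i=u^{2-p_i}|D_iu|^{p_i-2}D_iu$ and compute on $\{\pm(w-k^m)>0\}$. We have
\[
D_iu=(p_N-1)\,w^{p_N-2}D_iw,
\]
hence
\[
A_iD_iw=c\,u^{2-p_i}\,w^{(p_N-2)(p_i-1)}|D_iw|^{p_i}=c\,w^{\,p_N-p_i}|D_iw|^{p_i},
\]
since $(2-p_i)(p_N-1)+(p_N-2)(p_i-1)=p_N-p_i$. Note that $w^{p_N-p_i}=u^{\frac{p_N-p_i}{p_N-1}}$, which is exactly the weight in \eqref{energy-estimates}.

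The cutoff contributes the cross term
\[
p_N\,A_i\,(w-k^m)_\pm\,\zeta^{p_N-1}D_i\zeta,
\]
whose modulus is bounded by
\[
c\,w^{(p_N-p_i)\frac{p_i-1}{p_i}}|D_iw|^{p_i-1}\zeta^{p_N-1}\cdot w^{\frac{p_N-p_i}{p_i}}(w-k^m)_\pm|D_i\zeta|.
\]
Young's inequality with exponents $p_i/(p_i-1)$ and $p_i$ absorbs the gradient part into the left-hand side. The remainder is $\gamma\, w^{p_N-p_i}(w-k^m)_\pm^{p_i}|D_i\zeta|^{p_i}\zeta^{p_N-p_i}$, and $\zeta^{p_N-p_i}\le1$ since $p_i\le p_N$. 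Taking the supremum over $t$ and adding the two resulting inequalities (the sup term and the integrated gradient term) gives \eqref{energy-estimates}.

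\emph{Main obstacle.} The main difficulty is the rigorous time regularization. We must check that the mollified test function stays in the admissible class. The functional framework \eqref{functional} provides $D_i(u^{1/(p_i-1)})\in L^{p_i}$, and we will use it to show that $w^{p_N-p_i}|D_iw|^{p_i}$ is integrable. We must also justify the convexity inequality for the time term uniformly in the mollification parameter, and the passage to the limit. Both steps will follow the appendix of \cite{CHSS}.
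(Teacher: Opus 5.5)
Your argument is correct and is the standard derivation behind this lemma. The paper states the lemma without proof and takes it from \cite{CHSS}. Your test function $\pm(u^{\frac{1}{p_N-1}}-k^{\frac{1}{p_N-1}})_\pm\zeta^{p_N}$, the exponent identity $(2-p_i)(p_N-1)+(p_N-2)(p_i-1)=p_N-p_i$, and the Young splitting with leftover factor $\zeta^{p_N-p_i}\le 1$ all check out.

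One justification in the time regularization needs fixing. The one-sided inequality does not come from convexity of $s\mapsto s^{1/m}=s^{p_N-1}$, which is concave when $p_N<2$. It comes from convexity of $u\mapsto g_\pm(u^m,k^m)$, whose derivative $\pm(u^m-k^m)_\pm$ is nondecreasing for every $p_N>1$. Equivalently, it comes from monotonicity of $s\mapsto s^{1/m}$ and of $s\mapsto (s-k^m)_\pm$.

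Integrability of $A_iD_i\varphi$ also needs no extra work. Writing $v_i=u^{1/(p_i-1)}$, one has $A_i=(p_i-1)^{p_i-1}|D_iv_i|^{p_i-2}D_iv_i\in L^{p_i/(p_i-1)}$, and this is exactly why the functional setting requires $D_i(u^{1/(p_i-1)})\in L^{p_i}$.
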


\noindent The doubly-nonlinear nature of \eqref{prototype} is embodied in the terms $g_{\pm}$, that can be estimated through the following technical tool. 
\begin{lemma}\label{lem2.5}
There exists a constant $\gamma >0$, depending only on $m$, such that
\begin{equation*}
\frac{1}{\gamma}\,(k^m+u^m)^{\frac{1}{m}-1}(u^m-k^m)^2_{\pm}\leqslant g_{\pm}(u^m, k^m)\leqslant \gamma\, (k^m+u^m)^{\frac{1}{m}-1}(u^m-k^m)^2_{\pm}\,.
\end{equation*}
\end{lemma}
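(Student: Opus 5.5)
The plan is a scaling reduction followed by a splitting of the integration interval. Since $g_{\pm}$ is $\pm$ an integral of $s^{1/m-1}(s-k^m)_\pm$, I first substitute $s=k^m\sigma$ and set $v:=u^m/k^m$. For $g_+$ (where $u>k$) this gives
\[ g_+(u^m,k^m)=\frac{k^{m+1}}{m}\int_1^{v}\sigma^{\frac1m-1}(\sigma-1)\,d\sigma=:k^{m+1}\,G_+(v),\qquad v\ge1, \]
and the same substitution handles $g_-$, with the integral running over $[v,1]$ for $v\le1$. The target bounds rescale in the same way:
\[ (k^m+u^m)^{\frac1m-1}(u^m-k^m)^2_\pm=k^{m+1}(1+v)^{\frac1m-1}(v-1)^2_\pm. \]
So it suffices to show that the ratio
\[ R(v):=\frac{G_\pm(v)}{(1+v)^{\frac1m-1}(v-1)^2} \]
is bounded above and below by positive constants depending only on $m$, for $v\in(1,\infty)$ in the $+$ case and $v\in(0,1)$ in the $-$ case. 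The degenerate point $v=1$ is trivial, since both sides vanish there.

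\textbf{Case $g_+$, $v\ge1$.} Split into $v\le2$ and $v>2$.
\begin{itemize}
\item For $1\le v\le2$: on $[1,v]$ the factor $\sigma^{1/m-1}$ is comparable to $1$ with constants depending on $m$. Hence $G_+(v)\simeq\int_1^v(\sigma-1)\,d\sigma=(v-1)^2/2$. Also $(1+v)^{1/m-1}\simeq1$, so the ratio is controlled.
\item For $v>2$: $G_+(v)\le\frac1m\,(v-1)\int_1^v\sigma^{1/m-1}d\sigma\le(v-1)v^{1/m}$. For a lower bound, restrict the integral to $[v/2,v]$, where $\sigma-1\ge v/4$ and $\sigma^{1/m-1}\simeq v^{1/m-1}$. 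This gives $G_+(v)\gtrsim v^{1/m+1}$. Since $(1+v)^{1/m-1}(v-1)^2\simeq v^{1/m+1}$ when $v>2$, both bounds match.
\end{itemize}

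\textbf{Case $g_-$, $0\le v<1$.} Here $G_-(v)=\frac1m\int_v^1\sigma^{1/m-1}(1-\sigma)\,d\sigma$, and $(1+v)^{1/m-1}\simeq1$. Split into $v\ge1/2$ and $v<1/2$.
\begin{itemize}
\item For $v\ge1/2$: $\sigma^{1/m-1}\simeq1$ on $[v,1]$, so $G_-(v)\simeq(1-v)^2$.
\item For $v<1/2$: the lower bound comes from restricting to $[1/2,3/4]$, which gives $G_-(v)\ge c(m)>0$. The upper bound uses
\[ G_-(v)\le\frac1m\int_0^1\sigma^{1/m-1}\,d\sigma=1, \]
which is finite because $1/m-1>-1$. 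Since $(1-v)^2\in[1/4,1]$, the ratio is again bounded above and below.
\end{itemize}

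No step is a genuine obstacle. The only point needing care is the large-$v$ regime of $g_+$, where $\sigma^{1/m-1}$ can be increasing or decreasing depending on whether $m<1$ or $m>1$. Localizing to the dyadic block $[v/2,v]$ handles both monotonicity directions uniformly, and all constants depend only on $m$.
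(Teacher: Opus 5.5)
The paper gives no proof of this lemma. It is recalled as a standard technical tool for the doubly nonlinear term $g_\pm$ and used without argument, so there is no route to compare against. Your argument is essentially a correct, self-contained proof. The substitution $s=k^m\sigma$, $v=u^m/k^m$ makes both sides exactly $k^{m+1}$ times a function of $v$, which reduces the claim to bounding a one-variable ratio. Splitting into a bounded range ($1\le v\le 2$, resp.\ $1/2\le v<1$) and an asymptotic range then gives two-sided bounds with constants depending only on $m$. In particular, the $g_-$ case with $v\to 0$ uses only the integrability of $\sigma^{1/m-1}$ at $0$, i.e.\ $m>0$.

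There is one small slip, in the regime $v>2$ for $g_+$. You claim $\sigma-1\ge v/4$ on $[v/2,v]$, but there you only have $\sigma-1\ge v/2-1$, which is $\ge v/4$ only when $v\ge 4$. For $v$ slightly above $2$, points near the left endpoint have $\sigma-1$ close to $0$. The fix is immediate in either of two ways:
\begin{itemize}
\item Restrict instead to $[3v/4,v]$. There $\sigma-1\ge 3v/4-1\ge v/4$ for $v\ge 2$, and still $\sigma^{1/m-1}\simeq v^{1/m-1}$.
\item Keep $[v/2,v]$ but integrate rather than bound pointwise: $\int_{v/2}^{v}(\sigma-1)\,d\sigma=\tfrac{v}{8}(3v-4)\ge v^2/16$ for $v\ge 2$.
\end{itemize}
With this correction the lower bound $G_+(v)\gtrsim v^{1/m+1}$ holds for all $v>2$, and the argument is complete.
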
 \noindent Next Lemma states that, as a typical parabolic feature, the measure information can be transported forward in time. We traced the detailed proof of \cite{CHSS} in order to extrapolate the dependence on the data of the assumption.

\begin{lemma}\label{lem4.1}
Let $u\ge 0$ be a local weak super-solution to \eqref{prototype} in $Q=K_8(0)\times (0,1)$ and assume that 
\begin{equation}\label{hp-measure-forward}
   \left|\left[u(\cdot, 0)\ge k\right]\cap K_1(0)\right| \ge \alpha_o |K_1|\,, 
\end{equation}  for some $\alpha_o\in (0,1)$ and $k \in \R$.
 Then there exist $\varepsilon_1$, $\delta_1 \in (0, 1)$ depending only on the data and on $\alpha_o$, such that
\begin{equation}\label{eq4.5}
|K_1(0)\cap [u(\cdot, \tau)\geqslant \varepsilon_1\,k]|\geqslant \frac{\alpha_o}{4}|K_1(0)|,
\end{equation}
for all times
\begin{equation*}
0 \leqslant \tau\leqslant \delta_1\,.
\end{equation*} The dependence of $\varepsilon_1, \delta_1$ from $\alpha_o$ is 
\[\delta_1 \approx \frac{\alpha_o^{p_N+1} (1-\alpha_o)}{\gamma}\quad \text{and} \quad \varepsilon_1 \approx \frac{\alpha_o}{\gamma}\,,\]
for a constant $\gamma>0$ depending only on the data $\{N, p_i\}$.
\end{lemma}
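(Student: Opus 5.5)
The plan is to use the energy estimate of Lemma \ref{lem2.6} for the truncations $(u^{\frac{1}{p_N-1}}-k^{\frac{1}{p_N-1}})_-$, with a cutoff $\zeta=\zeta(x)$ that does not depend on time. Write $m=\frac{1}{p_N-1}$ and $v=u^m$. Take $\zeta\equiv 1$ on $K_{1-\sigma}(0)$, supported in $K_1(0)$, with $|D_i\zeta|\le \sigma^{-1}$. Since $\zeta_t=0$, the estimate gives, for every $0\le\tau\le\delta$,
\[
\int_{K_1(0)\times\{\tau\}} g_-(v,k^m)\,\zeta^{p_N}\,dx \le \int_{K_1(0)\times\{0\}} g_-(v,k^m)\,\zeta^{p_N}\,dx+\gamma\sum_i\iint_{K_1\times(0,\tau)} u^{\frac{p_N-p_i}{p_N-1}}(v-k^m)_-^{p_i}\,\sigma^{-p_i}\,dx\,dt .
\]

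First I bound the last integral. On the set where $u<k$ we have $u^{\frac{p_N-p_i}{p_N-1}}(v-k^m)_-^{p_i}\le k^{\frac{p_N-p_i}{p_N-1}}k^{\frac{p_i}{p_N-1}}=k^{\frac{p_N}{p_N-1}}$. Hence the whole integral is at most $\gamma\,\delta\,\sigma^{-p_N}k^{\frac{p_N}{p_N-1}}|K_1|$. This is the term that the parabolic homogeneity keeps scale-free, and it is the source of the restriction on $\delta_1$.

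Next I compare the two $g_-$ terms using Lemma \ref{lem2.5}. On the left, over the set $[u(\cdot,\tau)<\varepsilon k]\cap K_{1-\sigma}$, we have $v\le \varepsilon^m k^m$. Then $(k^m+v^m)^{\frac1m-1}\approx k^{1-m}$, because $\frac1m-1=p_N-2$ may have either sign but $v\le k^m$ throughout. Also $(k^m-v)^2\ge (1-\varepsilon^m)^2k^{2m}$. So the left side is at least $\gamma^{-1}(1-\varepsilon^m)^2k^{1+m}\,|[u(\cdot,\tau)<\varepsilon k]\cap K_{1-\sigma}|$. On the right, $g_-$ vanishes where $u(\cdot,0)\ge k$, and elsewhere it is at most $\gamma k^{1+m}$. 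Note that $1+m=\frac{p_N}{p_N-1}$, consistent with the error term. Here the constant from Lemma \ref{lem2.5} is an issue: it is not $1$, so a plain comparison gives $\gamma(1-\alpha_o)|K_1|$ with $\gamma>1$, which is useless.

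The main obstacle is therefore to get a sharp constant. I would avoid Lemma \ref{lem2.5} on the initial term and use the exact monotonicity instead: for $v\le k^m$, $g_-(v,k^m)\le g_-(0,k^m)=:G k^{1+m}$, with $G$ depending only on $p_N$. On the left I use $g_-(v,k^m)\ge g_-(\varepsilon^m k^m,k^m)=G_\varepsilon k^{1+m}$, where $G_\varepsilon\to G$ as $\varepsilon\to0$ by continuity. This yields
\[
|[u(\cdot,\tau)<\varepsilon k]\cap K_{1-\sigma}|\le \frac{G}{G_\varepsilon}(1-\alpha_o)|K_1|+\gamma\frac{\delta}{\sigma^{p_N}G_\varepsilon}|K_1| .
\]
Add $|K_1\setminus K_{1-\sigma}|\le N\sigma|K_1|$ to both sides. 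Then choose $\sigma\approx\alpha_o/(8N)$, choose $\varepsilon$ so that $\frac{G}{G_\varepsilon}(1-\alpha_o)\le 1-\frac{\alpha_o}{2}$, and choose $\delta\approx\alpha_o\sigma^{p_N}/\gamma$. With these choices the measure of $[u(\cdot,\tau)<\varepsilon k]\cap K_1$ is at most $(1-\frac{\alpha_o}{4})|K_1|$, which is \eqref{eq4.5}.

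Two points remain to be tracked. For the choice of $\varepsilon$, the quantity $G-G_\varepsilon$ is linear in $\varepsilon^m$ near $0$. Tracking the dependence gives $\delta_1\approx \alpha_o^{p_N+1}/\gamma$, and the extra factor $(1-\alpha_o)$ can be inserted harmlessly. Likewise $\varepsilon_1$ can be taken $\approx\alpha_o/\gamma$ once $\varepsilon^m$ versus $\varepsilon$ is absorbed; if this exponent does not come out cleanly, I would simply record the dependence that the argument produces.
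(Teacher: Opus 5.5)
Your proposal is correct and follows the standard argument that the paper defers to \cite{CHSS} (the paper gives no proof of its own): the energy estimate for $(u^{m}-k^{m})_-$ with a time-independent cutoff, which uses that the initial $g_-$ term enters with coefficient $1$ and compares the exact values $g_-(0,k^m)$ and $g_-(\varepsilon^m k^m,k^m)$ rather than the two-sided bounds of Lemma \ref{lem2.5}. One correction to your bookkeeping: $g_-(0,k^m)-g_-(\varepsilon^m k^m,k^m)=k^{1+m}\big(\varepsilon-\frac{\varepsilon^{1+m}}{1+m}\big)$ is linear in $\varepsilon$, not $\varepsilon^m$, and since $G=\frac{m}{1+m}=\frac{1}{p_N}$, the choice $\varepsilon_1=\frac{\alpha_o}{2p_N}$ already gives $\frac{G}{G_{\varepsilon_1}}(1-\alpha_o)\le\frac{1-\alpha_o}{1-\alpha_o/2}\le 1-\frac{\alpha_o}{2}$, so $\varepsilon_1\approx\alpha_o/\gamma$ follows directly with nothing to absorb.
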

\noindent Finally, as a main Tool of the Trade for the Weak Harnack inequality, we recall the following {\it expansion of positivity} Lemma, obtained through a refinement of the De Giorgi level set method. The latter, being based on the additivity properties of the measure, does not provide a power-like dependence on the data.

\begin{lemma}\label{EP}
Let $u$ be a non-negative, local weak super-solution to \eqref{prototype} and let us assume that for some $r$, $k>0$ and $\alpha_p \in (0,1)$ there holds
\begin{equation}\label{EP-HP}
\left|
\left[u(\cdot, s)\geqslant k\right]
\cap\K_r(y)\right|\geqslant \alpha_p |\K_r(y)|.
\end{equation}
Then, there exist numbers $\eta_p$, $\delta_p \in (0, 1)$ depending only on {  the data} and $\alpha_p$ such that
\begin{equation}\label{EP-TH}
u(x, t)\geqslant \eta_p\,k,
\end{equation}
for all $(x,t)$ respecting
\begin{equation}\label{EP-times}
x\in \K_{2r}(y),\qquad
s+\frac{\delta_p}{2}r^p\leqslant t \leqslant s+\delta_p r^p,
\end{equation}
provided that 
$$\K_{8r}(y)\times(s, s+r^p)\subset \Omega_T.$$
\end{lemma}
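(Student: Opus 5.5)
The statement to prove is the expansion of positivity, Lemma \ref{EP}. The plan is the De Giorgi route of \cite{CHSS}, made intrinsic to the anisotropic geometry. The key structural fact is that \eqref{prototype} is invariant under $u\mapsto u/k$. It is also invariant under the anisotropic rescaling $x_i\mapsto y_i+r^{p/p_i}x_i$, $t\mapsto s+r^p t$, since each term $\partial_i(u^{2-p_i}|\partial_i u|^{p_i-2}\partial_i u)$ is homogeneous of degree one in $u$ and scales like $r^{-p}$ in $x_i$. So we may assume $k=1$, $r=1$, $y=0$, $s=0$, and work in $\K_8(0)\times(0,1)$. There $\K_1$ is the standard cube $K_1$, because $r^{p/p_i}=1$.

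\textbf{Step 1: propagate the measure information in time.} Apply Lemma \ref{lem4.1} with $\alpha_o=\alpha_p$. This gives
\[
|[u(\cdot,\tau)\ge \varepsilon_1]\cap K_1|\ge \tfrac{\alpha_p}{4}|K_1| \quad\text{for all } \tau\in[0,\delta_1].
\]

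\textbf{Step 2: shrink the measure of the sublevel sets.} Work on the cylinder $K_4\times(\delta_1/2,\delta_1]$ with levels $k_j=\varepsilon_1 2^{-j}$. Use the energy estimates \eqref{energy-estimates} for $(u^{1/(p_N-1)}-k_j^{1/(p_N-1)})_-$. Near the levels $k_j$, the weight $u^{(p_N-p_i)/(p_N-1)}$ is comparable to $k_j^{(p_N-p_i)/(p_N-1)}$. Combine this with the De Giorgi isoperimetric inequality applied direction by direction on each time slice, using the positive-measure set from Step 1. The result is the shrinking estimate
\[
|[u<k_{j_*}]\cap K_2\times(\delta_1/2,\delta_1]| \le \nu\, |K_2\times(\delta_1/2,\delta_1]|
\]
for any prescribed $\nu$, once $j_*=j_*(\nu,\alpha_p,\text{data})$ is large enough.

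\textbf{Step 3: De Giorgi iteration downward.} Run the iteration on shrinking cylinders with the anisotropic parabolic embedding. This gives $u\ge k_{j_*}/2$ on a smaller cylinder, provided $\nu$ is below a threshold that depends only on the data. At this point we already have positivity on $K_1\times(\tfrac34\delta_1,\delta_1]$.

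\textbf{Step 4: expand in space to $\K_{2}$.} The wider cube is reached with the same Steps 1–3 carried out on the larger cube $\K_{2}$, where the measure density is only $2^{-N}$ times that on $\K_1$. Equivalently, one can re-apply the argument once, starting from the positivity set just obtained. The statement allows $\eta_p$ to depend on $\alpha_p$ and carries no power-like dependence, so we lose at most constants.

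\textbf{Main obstacle: Step 2.} The anisotropic weights $u^{(p_N-p_i)/(p_N-1)}$ degenerate differently in each direction, so the length scales of the cylinders must be matched to the levels. This is exactly why the intrinsic cubes $\K^k_r$ are introduced. The homogeneity $u\mapsto \lambda u$ is what should make the time scale $\delta_p r^p$ independent of the level. First I would check that at level $k_j$, the intrinsic cube $\K^{k_j}_\rho$ is comparable to $\K_\rho$ up to the factor $k_j^{-(p_N-p_i)/p_i}$. Then I would choose the spatial cutoffs accordingly, so that all $N$ terms on the right of \eqref{energy-estimates} are balanced at each step $j$.
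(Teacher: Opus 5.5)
The paper does not prove this lemma. It recalls it from \cite{CHSS} as a product of the De Giorgi level-set method, so your architecture (Lemma \ref{lem4.1}, then shrinking of sublevel sets, then a De Giorgi iteration) is the expected one. Your reduction to $y=0$, $s=0$, $r=1$, $k=1$ via the two invariances of \eqref{prototype} is also correct.

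\textbf{The gap: the spatial expansion.} The gap sits in the spatial geometry, which is exactly where the lemma has content. After rescaling, only $\K_1$ coincides with a standard cube, since $\K_\rho=\prod_i(-\rho^{p/p_i},\rho^{p/p_i})$. The region where $u$ is known is $\K_8$, whose half-side in the $N$-th direction is $8^{p/p_N}$. This is smaller than $2$ as soon as $p_N>3p$, and it tends to $1$ if $p_N\to\infty$ with the other exponents fixed. So your cylinders over $K_4$ and $K_2$ in Steps 2--3 need not lie in the domain. Even granting them, Step 3 ends with positivity on $K_1$, the cube that already carried the data, so nothing has been expanded yet.

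Step 4 does not close this. Re-running Steps 1--3 at scale $2$ turns the working cube $K_4$ into $\{|x_i|<4\cdot 2^{p/p_i}\}$. This is not contained in $\K_8=\{|x_i|<8^{p/p_i}\}$ whenever some $p_i>p$, which always happens unless all $p_i$ coincide, because $p$ is their harmonic mean. Re-running Steps 1--3 at scale $1$ from the positivity set just obtained simply returns positivity on $K_1$ again.

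\textbf{How to fix it.} Work in anisotropic cubes from the start.
\begin{enumerate}
\item Since $|\K_\rho|=2^N\rho^N$, Lemma \ref{lem4.1} gives $|[u(\cdot,t)\ge\varepsilon_1]\cap\K_4|\ge\alpha_p4^{-N-1}|\K_4|$ for all $t\in[0,\delta_1]$.
\item Run the shrinking argument in $Q=\K_4\times(\delta_1/4,\delta_1]$, with a cut-off supported in $\K_8\times(0,\delta_1]$.
\item Run the De Giorgi iteration from $Q$ down to $\K_2\times(\delta_1/2,\delta_1]$.
\end{enumerate}
This gives \eqref{EP-TH} directly, with $\delta_p=\delta_1$, $\eta_p$ a fixed fraction of $\varepsilon_1 2^{-j_*}$, and the required time window. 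Step 4 then disappears.

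\textbf{The ``main obstacle'' is misdiagnosed.} Acting on it would actually break Step 2. By homogeneity, in the fixed geometry $\K_r\times r^p$ every term of \eqref{energy-estimates} at level $k$ is of order $k^{p_N/(p_N-1)}$, multiplied by $r^{-p}$ or $|\zeta_t|$. Indeed, the weight contributes $k^{(p_N-p_i)/(p_N-1)}$, the truncation contributes $k^{p_i/(p_N-1)}$, and by Lemma \ref{lem2.5} $g_-$ is of order $k^{p_N/(p_N-1)}$. This is exactly what \eqref{eq_eb2} records, so no level-dependent geometry is needed.

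The intrinsic cubes are harmful here. The half-sides of $\K^{k_j}_r$ are $r^{p/p_i}k_j^{-(p_N-p_i)/p_i}$, which blow up as $k_j\to0$ whenever $p_i<p_N$, so these cubes leave $\K_8$. Moreover, a cube that changes with $j$ destroys the telescoping $\sum_j|A_j\setminus A_{j+1}|\le|Q|$ on which the shrinking estimate rests.

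\textbf{Where the weight does need care: Step 3.} On $[u<k]$ the factor $u^{(p_N-p_i)/(p_N-1)}$ degenerates near $u=0$. Set $w=u^{1/(p_N-1)}$ and take levels $h_n\downarrow h_\infty>0$. You should feed the parabolic embedding with $(\max\{w,h_\infty\}-h_n)_-$, not with $(w-h_n)_-$. Its $x_i$-derivative lives on $[h_\infty<w<h_n]$, where the weight is at least $h_\infty^{p_N-p_i}$, so the weighted energy controls it.
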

\noindent Finally, the following classic iteration Lemma will be very useful.

\begin{lemma}\label{lem_iteration} Let $\{Y_n\}$ be a sequence of bounded numbers such that, for constants $\S,b>1$ and $\varepsilon \in (0,1)$
\begin{equation}\label{sqn}
    Y_n \leq \varepsilon Y_{n+1}+ \S b^n \ , 
\end{equation} 
\noindent with $\varepsilon b<1$. Then, by a simple iteration, there exists $\gamma >0$ such that 
\[Y_0 \leq \gamma\,  \S.\]
\end{lemma}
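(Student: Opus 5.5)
The statement to prove is Lemma \ref{lem_iteration}. I would iterate the inequality \eqref{sqn} forward a finite number of times and then let the number of steps tend to infinity, using the boundedness of $\{Y_n\}$ to kill the tail.

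Starting from $Y_0 \le \varepsilon Y_1 + \S$, I substitute the bound for $Y_1$, namely $Y_1 \le \varepsilon Y_2 + \S b$. This gives $Y_0 \le \varepsilon^2 Y_2 + \S(1+\varepsilon b)$. By induction on $n$, I would show that for every $n\in\N$
\begin{equation*}
Y_0 \le \varepsilon^{n} Y_{n} + \S\sum_{j=0}^{n-1} (\varepsilon b)^j .
\end{equation*}
The inductive step is immediate: multiply \eqref{sqn} written at index $n$ by $\varepsilon^n$, which yields $\varepsilon^n Y_n \le \varepsilon^{n+1}Y_{n+1} + \S(\varepsilon b)^n$, and add this to the inductive hypothesis.

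Next I pass to the limit. Since the sequence is bounded, say $|Y_n|\le M$, and $\varepsilon\in(0,1)$, we get $\varepsilon^n Y_n\to 0$ as $n\to\infty$. Since $\varepsilon b<1$, the geometric series converges. Hence
\begin{equation*}
Y_0 \le \frac{\S}{1-\varepsilon b},
\end{equation*}
which is the claim with $\gamma = (1-\varepsilon b)^{-1}$. This constant depends only on the product $\varepsilon b$, and the $Y_n$ need not be nonnegative.

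There is no real obstacle here. The only points to watch are that boundedness, rather than mere finiteness, is what makes the remainder $\varepsilon^nY_n$ vanish, and that the condition $\varepsilon b<1$ (not just $\varepsilon<1$) is what makes the accumulated error terms $\S(\varepsilon b)^j$ summable.
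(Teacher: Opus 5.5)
Your proof is correct and is exactly the ``simple iteration'' the paper invokes without giving details: telescoping to $Y_0\le \varepsilon^nY_n+\S\sum_{j=0}^{n-1}(\varepsilon b)^j$, using boundedness of $\{Y_n\}$ to make $\varepsilon^nY_n\to 0$, and using $\varepsilon b<1$ to sum the series gives the claim with $\gamma=(1-\varepsilon b)^{-1}$. (In fact an upper bound on the $Y_n$ alone would already suffice for the remainder term.)
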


\section{Weak Harnack inequality} \label{WHSection}
\noindent 
\begin{lemma}[Local Clustering] \label{lem-local-clustering} For $y \in \R^N$ and $r>0$, let $u \in W^{1,1}(K_r(y))$ be a function satisfying
\begin{equation}\label{local-clustering-hp}
\int_{K_r(y)} |\nabla (u-k)_{-}| \, dx \leq \gamma_c k r^{N-1}, \quad \text{and} \quad |[u \ge k] \cap K_r(y)| \ge \beta |K_r|\,,    
\end{equation} for some numbers $\beta \in (0,1)$, $k \in \R$ and $\gamma_c >0$. Then, for any $\lambda,\nu \in (0,1)$ there exists a point $x_c \in K_r(y)$ and a constant $\delta_c=\delta_c(N) \in (0,1)$ such that 
\begin{equation}\label{th-local-clustering}
\left|[u \leq \lambda k]     \cap K_{\rho_c}(x_c)| \leq \nu |K_{\rho_c} (x_c) \right| \,, \quad  \text{for} \quad \rho_c= r \bigg( \frac{\delta_c \beta^2(1-\lambda)\nu}{\gamma_c}\bigg)\,.
\end{equation}
\end{lemma}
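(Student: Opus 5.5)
The plan is the standard covering/counting argument: subdivide $K_r(y)$ into congruent subcubes, use the measure hypothesis to show that many of them are ``good'', use the gradient hypothesis to find a good one where the gradient integral is small, and conclude with De Giorgi's discrete isoperimetric inequality on that subcube. The point $x_c$ will be the center of the selected subcube, and $\rho_c$ its half-edge.

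\emph{Step 1 (subdivision and counting).} Fix an integer $n\ge 1$, to be chosen, and set $\rho:=r/n$. Partition $K_r(y)$, up to a null set, into $n^N$ disjoint cubes $K_\rho(x_j)$ of edge $2\rho$. Call a subcube \emph{good} if
\[
|[u\ge k]\cap K_\rho(x_j)|>\tfrac{\beta}{2}|K_\rho|.
\]
Let $\#G$ be the number of good cubes. The second hypothesis in \eqref{local-clustering-hp} gives
\[
\beta|K_r|\le \#G\,|K_\rho|+\tfrac{\beta}{2}|K_r|,
\]
and hence $\#G\ge \tfrac{\beta}{2}n^N$. The good cubes are disjoint, so by the first hypothesis in \eqref{local-clustering-hp} the sum over them of $\int_{K_\rho(x_j)}|\nabla(u-k)_-|\,dx$ is at most $\gamma_c k r^{N-1}$. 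Therefore some good cube, call it $K_\rho(x_c)$, satisfies
\[
\int_{K_\rho(x_c)}|\nabla (u-k)_-|\,dx\le \frac{2\gamma_c k r^{N-1}}{\beta n^N}.
\]

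\emph{Step 2 (isoperimetric inequality).} On the convex set $K_\rho(x_c)$ apply De Giorgi's discrete isoperimetric inequality to $w:=\min\{u,k\}$ with the levels $\lambda k<k$:
\[
(1-\lambda)k\,|[u\le\lambda k]\cap K_\rho(x_c)|\,|[u\ge k]\cap K_\rho(x_c)|\le \gamma(N)\,\rho^{N+1}\int_{K_\rho(x_c)\cap[\lambda k<u<k]}|\nabla u|\,dx .
\]
On $[u<k]$ we have $|\nabla u|=|\nabla(u-k)_-|$, so the right-hand integral is controlled by Step 1. Since the cube is good, $|[u\ge k]\cap K_\rho(x_c)|\ge \frac\beta2 (2\rho)^N$. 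Dividing by $|K_\rho|$ and using $\rho^{1-N}r^{N-1}n^{-N}=n^{-1}$, I expect
\[
\frac{|[u\le\lambda k]\cap K_\rho(x_c)|}{|K_\rho|}\le \frac{\gamma(N)\,\gamma_c}{\beta^2(1-\lambda)\,n}.
\]

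\emph{Step 3 (choice of $n$).} Choose $n$ as the smallest integer with $n\ge \gamma(N)\gamma_c/(\beta^2(1-\lambda)\nu)$; this makes the right-hand side at most $\nu$. This yields $\rho_c=r/n$, which has the stated form $r\,\delta_c\beta^2(1-\lambda)\nu/\gamma_c$ with $\delta_c\approx 1/\gamma(N)$. Rounding to an integer only changes $\delta_c$ by a factor of at most $2$. If the quotient is below $1$, take $n=1$, and $\delta_c$ is adjusted accordingly, or one can assume $\gamma_c$ large without loss of generality.

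\emph{Main obstacle.} The only delicate point is invoking the isoperimetric inequality in the correct form. It must be applied on a cube with the factor $\rho^{N+1}$, and the gradient integral must be restricted to $[\lambda k<u<k]$, so that it is bounded by $\int|\nabla(u-k)_-|$. The counting and the scaling are routine.
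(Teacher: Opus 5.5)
The paper states this lemma without proof (it is the local clustering lemma of DiBenedetto--Gianazza--Vespri), and your argument is exactly the standard proof of that result and is correct: you split $K_r(y)$ into $n^N$ subcubes, select a good one with $\int|\nabla(u-k)_-|\le 2\gamma_c k r^{N-1}/(\beta n^N)$, and apply De Giorgi's isoperimetric inequality there, with the scaling $\rho^{1-N}r^{N-1}n^{-N}=n^{-1}$. The remaining loose ends are cosmetic: $k>0$ is tacitly needed, and because $n$ must be an integer your $\delta_c$ is only trapped between two $N$-dependent constants rather than being a single $\delta_c(N)$, which is harmless for the later application and can be removed by covering $K_r(y)$ with cubes of radius exactly $\rho_c$ with overlap at most $2^N$.
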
 
\noindent The next proposition lies at the heart of the parabolic Weak Harnack inequality; it has to be formulated in an appropriate anisotropic geometry.

\begin{proposition}[Refined Expansion of Positivity] \label{REP} Let $u\ge 0$ be a local weak solution of \eqref{prototype} and let us assume that for some $r,k>0$, $\alpha_o \in (0,1)$ the following measure information is at stake,
\begin{equation}\label{hp-ref-exp-pos}
|[u(\cdot, s)>k] \cap \K_r(y)| \ge \alpha_o |\K_r|\,.
\end{equation} Then, there exist constants $\eta_o, \delta_o \in (0,1)$, $d>1$,  depending only on the data $\{N, p_i\}$, such that
\begin{equation}\label{th-ref-exp-pos}
    u(\cdot, t) \ge \eta_o \, \alpha_o^d\, k \qquad \text{a.e. in} \quad \K_{2r}(y)\,,
\end{equation} for all times
\[s+ \frac{\delta_o}{2} r^p \leq t \leq s+ {\delta_o} r^p\,.\]
\end{proposition}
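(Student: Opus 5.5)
We prove Proposition~\ref{REP} by reducing to the expansion of positivity Lemma~\ref{EP}, applied at a small anisotropic scale where the measure density is \emph{independent} of $\alpha_o$. Lemma~\ref{lem-local-clustering} produces that small scale, and Lemma~\ref{lem4.1} supplies its hypotheses. The power $\alpha_o^d$ then comes only from the polynomial dependences quoted in Lemma~\ref{lem4.1}, Lemma~\ref{lem-local-clustering}, and the number of expansion steps.

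\emph{Step 1 (normalization).} Equation \eqref{prototype} is invariant under $u\mapsto u/k$ because it is $1$-homogeneous in $u$. It is also invariant under the anisotropic dilation $x_i\mapsto y_i+r^{p/p_i}x_i$, $t\mapsto s+r^p t$. Indeed, each term $\partial_i(u^{2-p_i}|\partial_iu|^{p_i-2}\partial_iu)$ scales like $r^{-p}$. We may therefore assume $k=1$, $r=1$, $y=0$, $s=0$, so that $\K_1=K_1$.

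\emph{Step 2 (forward propagation).} Lemma~\ref{lem4.1} gives $|[u(\cdot,\tau)\ge\varepsilon_1]\cap K_1|\ge\frac{\alpha_o}{4}|K_1|$ for all $0\le\tau\le\delta_1$, with $\varepsilon_1\approx\alpha_o$ and $\delta_1\approx\alpha_o^{p_N+1}$.

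\emph{Step 3 (local clustering).} We pick the level $\ell=\varepsilon_1$. We test the energy estimate Lemma~\ref{lem2.6} for $(u^{1/(p_N-1)}-\ell^{1/(p_N-1)})_-$ on $K_2\times(0,\delta_1)$ and bound $g_-$ via Lemma~\ref{lem2.5}. Using $u\le\ell$ on the support of the truncation, the weights $u^{(p_N-p_i)/(p_N-1)}$ combine with Hölder to give
\[
\int_0^{\delta_1}\!\!\int_{K_1}\sum_i|D_i(u-\ell)_-|\,dx\,dt\le \gamma\,\ell\,\delta_1^{1-1/p_N}\,.
\]
Dividing by $\delta_1$, we find a time $\tau_*\in(0,\delta_1)$ at which $\int_{K_1}|\nabla(u-\ell)_-|(\cdot,\tau_*)\le\gamma\ell\,\delta_1^{-1/p_N}=:\gamma_c\ell$, while the measure information of Step~2 still holds at $\tau_*$. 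Lemma~\ref{lem-local-clustering} with $\beta=\alpha_o/4$, $\lambda=\nu=\frac12$ then yields a point $x_c$ and a radius $\rho_c\approx\alpha_o^{2}\delta_1^{1/p_N}$, polynomial in $\alpha_o$, such that
\[
|[u(\cdot,\tau_*)\ge \ell/2]\cap K_{\rho_c}(x_c)|\ge\tfrac12|K_{\rho_c}|\,.
\]
We then shrink to an anisotropic cube $\K_{\bar\rho}(x_c)\subset K_{\rho_c}(x_c)$ with $\bar\rho^{p/p_i}\le\rho_c$, i.e.\ $\bar\rho=\rho_c^{p_N/p}$. The density on the smaller cube degrades, so we should instead apply clustering directly in anisotropic cubes, after rescaling each direction, in order to keep density $\frac12$.

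\emph{Step 4 (expanding positivity).} Lemma~\ref{EP} with $\alpha_p=\frac12$ gives $u\ge\eta_p\ell/2$ on $\K_{2\bar\rho}(x_c)$ for $t\in\tau_*+[\frac{\delta_p}{2},\delta_p]\bar\rho^p$. Since $\alpha_p$ is fixed, we iterate Lemma~\ref{EP} from a time slice inside this window, doubling the radius each time. We need $n\approx\log_2(1/\bar\rho)\approx \gamma\log(1/\alpha_o)$ steps to cover $\K_2(0)$. The value then becomes $\eta_p^{n}\ell\approx\alpha_o^{\gamma\log(1/\eta_p)}\alpha_o$, which is the desired form $\eta_o\alpha_o^d$.

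\emph{Main obstacle: timing.} The final positivity interval must be exactly $[\frac{\delta_o}{2},\delta_o]$, independent of $\alpha_o$. This requires two things. First, the geometric sum of waiting times $\sum_j\delta_p(2^j\bar\rho)^p$ is dominated by its last term, which is comparable to $\delta_p$, and the initial delay $\tau_*\le\delta_1$ is small. Second, since each application of Lemma~\ref{EP} only gives positivity on a window, we must choose a slice within the window at each step so that the terminal window covers $[\frac{\delta_o}{2},\delta_o]$. To handle this, we would first run the argument for every starting time $\tau_*$ in a small range. Alternatively, we use the fact that Lemma~\ref{EP} applied at the last step from all times in an interval of length $\sim\delta_p$ produces overlapping windows. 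Choosing $\delta_o$ a fixed fraction of $\delta_p$ then fixes the constants.
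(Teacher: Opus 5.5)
Your skeleton matches the paper's: Lemma~\ref{lem4.1}, an $L^1$ gradient bound from Lemma~\ref{lem2.6}, a good time slice, Lemma~\ref{lem-local-clustering}, then about $\log(1/\alpha_o)$ doubling applications of Lemma~\ref{EP}. However, Step~3 breaks down because you cluster $u$ itself. Put $w=u^{1/(p_N-1)}$. The energy estimate controls $\iint w^{p_N-p_i}|D_i(w-\ell^{1/(p_N-1)})_-|^{p_i}$, and since $D_iu=(p_N-1)w^{p_N-2}D_iw$, your H\"older splitting reads
\[
|D_iu|=(p_N-1)\Big(w^{\frac{p_N-p_i}{p_i}}|D_iw|\Big)\,w^{\,p_N-1-\frac{p_N}{p_i}} .
\]
The leftover exponent $p_N-1-p_N/p_i$ is negative whenever $(p_i-1)(p_N-1)<1$. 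This happens for every $i$ as soon as $p_N<2$, and for $i=1$ e.g.\ when $p_1=\frac32$, $p_N=\frac52$. In that case ``$u\le\ell$ on the support of the truncation'' gives nothing. You would need to control a negative power of $u$ on $[u<\ell]$, i.e.\ quantitative information on how small $u$ is, which is exactly what the proposition is meant to produce. Moreover, when $p_N<2$ nothing in the functional setting even places $(u-\ell)_-$ in $W^{1,1}$, as Lemma~\ref{lem-local-clustering} requires. So your Step~3 only covers the range $(p_1-1)(p_N-1)\ge 1$.

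The paper's remedy is to cluster $u^{\alpha}$ with $\alpha=p_N/(p_N-1)$, i.e.\ $w^{p_N}$. For this function
\[
|D_iw^{p_N}|=p_N\Big(w^{\frac{p_N-p_i}{p_i}}|D_iw|\Big)\,w^{\frac{p_N(p_i-1)}{p_i}},
\]
so the leftover power is positive for every $i$; this is \eqref{eq:boundvalpha}. Clustering $w^{p_N}$ at level $\frac12\ell^{\alpha}$ then gives density for $u$ at level $2^{-1/\alpha}\ell$. A minor point: even in the good range, the $i$-th term is of size $\ell\,\delta_1^{1-1/p_i}$. The sum is therefore bounded by $\ell\,\delta_1^{1-1/p_1}$ rather than $\ell\,\delta_1^{1-1/p_N}$, which only affects $d$.

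Your two flagged concerns are genuine, and the paper is also loose on both, but your remedies are not yet arguments.

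\emph{Anisotropic cubes.} Anisotropic rescaling cannot turn the standard cube $K_{\rho_c}(x_c)$ into an anisotropic cube at a scale other than the normalisation scale. The paper's passage from \eqref{5} to \eqref{6} silently identifies the two. A pigeonhole argument closes this with data-only losses:
\begin{itemize}
\item take $\nu=2^{-Np_N/p-1}$ in Lemma~\ref{lem-local-clustering};
\item split $K_{\rho_c}(x_c)$ into congruent boxes with half-sides in $(\frac12\bar\rho^{p/p_i},\bar\rho^{p/p_i}]$, where $\bar\rho=\rho_c^{p_N/p}$;
\item one box has bad-set density at most $\nu$;
\item the cube $\K_{2^{-p_N/p}\bar\rho}$ centred in that box fills a fraction at least $2^{-Np_N/p}$ of it, hence has good-set density at least $\frac12$.
\end{itemize}
Its radius is still polynomial in $\alpha_o$.

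\emph{Timing.} The paper's $\delta_o=\delta_1\big(1+\sigma^p\frac{2^{np}-1}{2^p-1}\big)$ still depends on $\alpha_o$. What makes $\delta_o$ data-only is the following. Reaching $\K_{Lr}(y_c)$ costs time at most $\delta_1r^p+\gamma\,\delta_pL^pr^p$, uniformly in $\alpha_o$. After that, apply Lemma~\ref{EP} a data-dependent number $M$ of times with radius $r$ centred at $y$, each application started from every time of the previous positivity window. The resulting windows overlap, so they cover a fixed interval $[\frac{\delta_o}{2},\delta_o]r^p$, at the price of a factor $\eta_p^{M}$.
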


\begin{proof} Let us assume without loss of generality that $(y,s)=(0,0)$. By Lemma \ref{lem4.1}, there exist $\delta_1(\alpha_o), \varepsilon_1(\alpha_o) \in (0,1)$ such that 
\begin{equation}\label{1}
| [u(\cdot, t) \ge \varepsilon_1(\alpha_o) k] \cap \K_r(0)| \ge \frac{\alpha_o}{4} |\K_r|\,,
\end{equation} for all $0 \leq t \leq \delta_1(\alpha_o)$. Now we set
\[\Q'= \K_{2r}(0) \times (0, \delta_1 r^p], \qquad \text{and} \qquad \Q=K_r(0)\times \bigg(\frac{\delta_1}{2} r^p, \, r^p\bigg]\,.\]
We apply Lemma \ref{lem2.6} to $(u-k)_{-}$ over the pair of cylinders $\Q', \Q$, and manipulate the parabolic terms with 
Lemma \ref{lem2.5} and with a function $\zeta \in C_o(\Q')$ 
respecting the following conditions,
\[0\leq \zeta \leq 1, \quad \zeta|_{\Q}\equiv 1,\quad |D_i\zeta|\leq \gamma/ r^{\frac{p}{p_i}},\quad |\partial_t \zeta |\leq \gamma / \delta_1.\]
We obtain the following energy bound,
\begin{equation}\label{eq_eb2}
    \begin{aligned}
\sum_i  \iint_{\Q} &u^{\frac{p_N-p_i}{p_N-1}} |D_i (u^{\frac{1}{p_N-1}}-k_1^{\frac{1}{p_N-1}})_{-}|^{p_i} \, dxdt \\
&\leq \iint_{\Q'} \bigg{\{}(k_1^{\frac{1}{p_N-1}}+u^{\frac{1}{p_N-1}}) (u^{\frac{1}{p_N-1}}-k_1^{\frac{1}{p_N-1}})_{-}^2 + \sum_i u^{\frac{p_N-p_i}{p_N-1}}(u^{\frac{1}{p_N-1}}-k_1^{\frac{1}{p_N-1}})_{-}^{p_i} | D_i \zeta|^{p_i} \bigg{\}}\, dxdt   \\
& \qquad \leq \frac{\gamma k_1^{\frac{p_N}{p_N-1}}}{\delta_1(\alpha_o)} |\K_{2r}| = \frac{\gamma (\varepsilon_1 \,k)^{\frac{p_N}{p_N-1}}}{\alpha_o^{p_N+1}} |\K_{2r}|\,, 
    \end{aligned}
\end{equation} 
where $k_1= \varepsilon_1\, k$. Now we take into consideration the following change of variables
\begin{equation}\label{eq_cov}\tau:=\frac{t}{\delta_1 r^p}\,,\quad    z_i:=\frac{x_i}{r^{p/p_i}}\,,\quad \text{and }\ v(z,\tau):=
    \frac{u(x,t)}{\varepsilon_1 k}.\,\end{equation}
%\begin{equation*}
%    \begin{cases}
%        t \rightarrow t / (\delta_1 r^p)=: \tau,\\
%        x_i \rightarrow {x_i}/ r^{\frac{p}{p_i}} = z_i\,,
%    \end{cases} \text{and} \qquad v(z,\tau)=\frac{u(t,s)}{\varepsilon_1 k}\,.
%\end{equation*} 
The measure information 
\eqref{1}, transforms into
\begin{equation} \label{2}
|[v(\cdot, \tau) >1] \cap K_1| \ge \frac{\alpha_o}{4} |K_1|\,, \qquad \forall\quad  {1/2}<\tau<1\,,
\end{equation} and the energy bound \eqref{eq_eb2}, turns in the following inequality, which is independent of $k$,
\begin{equation}
    \label{3}
\sum_i \int_{1/2}^1 \int_{K_1}  v^{\frac{p_N-p_i}{p_N-1}}\,\left|D_{z_i} ( v^{\frac{1}{p_N-1}}-1)_{-}\right|^{p_i} \, dzd\tau \leq \frac{\gamma}{\alpha_o^{p_N+1}}\,\,.
\end{equation} \noindent Our aim now is to bound \eqref{3} from below by the norm $\|\cdot \|_{W^{1,1}}$ of the function $v^{\alpha}$ where 
\[\alpha= 1+\frac{1}{p_N-1}.\] 
%is a power greater than one.
Indeed, by H\"older inequality and by setting $\hat{Q}= K_1 \times (1/2,\, 1)$, we get
\begin{equation}\label{eq:boundvalpha}
    \begin{aligned}
        \iint_{\hat{Q}} &|\nabla (v^{\alpha}-1)_{-}| \, dzd\tau \\
        & \leq  \sum_i  \iint_{\hat{Q} \cap [v\leq 1]} v^{\alpha-1} |D_i v|\, dzd\tau \\
        & \leq  \sum_i  \iint_{\hat{Q} \cap [v\leq 1]}  \bigg( v^{\frac{2-p_i}{p_N-1}} |D_i v|^{p_i} \, dz d\tau \bigg)^{\frac{1}{p_i}} \bigg( \iint_{\hat{Q} \cap [v\leq 1]} v^{(\frac{2-p_i}{p_N-1})(\frac{1}{p_i-1})+ \frac{(\alpha-1)p_i}{p_i-1}} \,\, dzd\tau\bigg)^{\frac{p_i-1}{p_i}}\\
        & \leq \sum_i \bigg( \iint_{\hat{Q} \cap [v\leq 1]} \left|D_i(v^{\frac{1}{p_N-1}}-1)_{-}\right|^{p_i} \, dzd\tau \bigg)^{\frac{1}{p_i}} \, \left|\hat{Q} \cap [v\leq 1]\right|^{1-\frac{1}{p_i}} \\
        & %\qquad \qquad \qquad \qquad  \quad 
        \leq {\gamma}\, {\alpha_o^{-(\frac{p_N+1}{p_1})} }\,.
    \end{aligned}
\end{equation}

\begin{lemma}
There exists a time $\bar{\tau} \in (1/2,\, 1)$ such that
\begin{equation}\label{4}
\int_{K_1\times \{ \bar{\tau}\}} |\nabla (v^{\alpha}-1)_{-} |\, dx 
\leq \gamma 2^N {\alpha_o^{-\frac{p_N+1}{p_1}}}  \qquad \text{and} \qquad |[v(\cdot, \bar{\tau}) \ge 1] \cap K_1| \ge \frac{\alpha_o}{4} |K_1|\,.
\end{equation} 
\end{lemma}
\begin{proof} 
%We can suppose $\alpha_o \leq 1/2$, and 
The second inequality was
already proved at \eqref{2}.
To prove the first inequality, we define the set
\begin{align*}
T_1&:=\bigg{\{} \tau \in (1/2, \, 1):  \int_{K_1 \times \{\tau\}} |\nabla (v^{\alpha}-1)_{-} | \, dx
> \gamma2^N\alpha_o^{-\frac{p_N+1}{p_1}}  \bigg{\}},
%T_2&:=\bigg{\{}\tau \in (1/2, \, 1): |[v(\cdot,t)>1] \cap K_1| \ge \frac{\alpha_o}{8} \bigg{\}}.
\end{align*} 
From the definition of $T_1$ and \eqref{eq:boundvalpha}, we have 
\[
{2^N\gamma}  |T_1| {\alpha_o^{-\frac{p_N+1}{p_1}}}
\leq  \iint_{\hat{Q}}  |\nabla (v^{\alpha}-1)_{-} | (\cdot,\tau ) \, dz\, d\tau 
\leq {\gamma}{\alpha_o^{-\frac{p_N+1}{p_1}}}, \quad \Rightarrow \quad |T_1|<2^{-N}.
\]
%On the other hand, by the definition of $T_2$,
%\begin{equation*}
%    \begin{aligned}
%    \alpha_o/4 < |[v>1]
%    \cap \hat{Q}|
%    &= \int_{1/2}^1  |[v(\cdot,\tau)>1] \cap K_1| \, d \tau \\
%    &= \int_{T_2} |[v(\cdot,\tau)>1] \cap K_1| \, d\tau + \int_{T_2^c} |[v(\cdot,\tau)>1] \cap K_1|\, d\tau \\
%    &\leq |T_2|+ \alpha_o/8,\\
%    &\Rightarrow |T_2|>\alpha_o/8\,.    \end{aligned}
%\end{equation*} 
This proves the claim since we have
\[
|T_1^c|>1-\frac{1}{2^N}>0.
\]
%This proves the claim, since $|T_1^c|+|T_2| >1/2$.
\end{proof}

\noindent The sliced information \eqref{4} now is perfect to apply the local clustering Lemma \ref{lem-local-clustering} to the function $v^{\alpha}$ with $\lambda=1/2$ and $\nu=1/2$,
obtaining that there exists a point $x_c \in K_1$ where the measure information clusters:
\begin{equation} \label{5}
    \left|
    \left[v^{\alpha}(\cdot, \bar{\tau}) >\dfrac12\right] \cap K_{\sigma}(x_c)\right|\, >\frac{|K_{\sigma}|}{2} \,, \qquad \sigma = \overline\gamma\, \alpha_o^{2+\frac{p_N+1}{p_1}}\,,
\end{equation} for a constant $\overline \gamma>0$ that depends only on the data $\{N,p_i\}$.
We can 
now consider the inverse coordinate change
with respect to \eqref{eq_cov}, and get
a point $y_c \in \K_r$ and a time 
\[
\bar{t} \in \left(\delta_1\frac{r^p}{2}, \delta_1 r^p\right)
\]
such that the following measure information
is verified,
\begin{equation}
    \label{6}
    \left|
    \left[u(\cdot, \bar{t}) > 2^{-\frac{1}{\alpha}}\,\varepsilon_1 k\right] \cap \K_{\sigma r}(y_c)\right|
    \ge \frac{|\K_{\sigma r}|}{2}\,,
\end{equation}
with $\varepsilon_1 \approx \alpha_o/\gamma\,$ as
stated in Lemma \ref{lem4.1}.
\noindent From here, we apply iteratively the expansion of positivity Lemma~\ref{EP} with $\alpha_p = 1/2$, to obtain, two constants $\eta_p, \delta_p \in (0,1)$ depending only on the data $\{N, p_i\}$, such that

\begin{equation} \label{7}
u(\cdot, t) \ge 2^{-\frac{1}{\alpha}}\,\varepsilon_1 k\, \eta_p^n \qquad \text{a.e. in }\K_{2^n \sigma r} (y_c),
\end{equation}
for $n \in \mathbb{N}$ and for all times 
\[
\bar t+\frac{\delta_p}{2} R^p < t<\bar t+\delta_p R^p
\]
with 
\begin{equation}\label{8}
R^p=\sum_{j=0}^{n-1} (2^j \sigma r)^p =\sigma^pr^p \left( \frac{2^{pn}-1}{2^p-1}  \right)\,.
\end{equation}

\noindent Fix $L>3$ 
a constant that will be made explicit later,
and let $n \in \mathbb{N}$ be sufficiently 
{big that $2^n \sigma \geq L$, \textit{i.e.}, $n \geq \log_2(L/\sigma)$.}
Recall that $y_c\in \K_r(0)$,
therefore we achieve the inclusion
\[
\K_{2r}(0)\subset\K_{Lr}(y_c)\subset \K_{2^n \sigma r}(y_c),
\]
and since\[\varepsilon_1 k \eta_p^n= \varepsilon_1 k 2^{n\,\log_2(\eta_p)}= \varepsilon_1 k \bigg(\frac{L}{\sigma} \bigg)^{\log_2(\eta_p)}=\frac{\alpha_o}{\gamma} k \bigg(\frac{\alpha_o^{2+(p_N+1)/p_1}}{\gamma L} \bigg)^{\log_2(\eta_p^{-1})}\]
we obtain from \eqref{7} the lower bound
\begin{equation}\label{9}
    u(\cdot, t) \ge \eta_f \, \alpha_o^d\, k\,, 
\end{equation} with 
\[ \eta_f= \frac{2^{-\frac{1}{\alpha}}}{\gamma\overline\gamma^{\log_2(\eta_p)}} L^{\log_2(\eta_p)}\qquad \text{and} \qquad d= 1+\bigg[2+ \bigg(\frac{p_N+1}{p_1} \bigg)\log_2(\eta_p^{-1})\bigg]\,.\]
\noindent Moreover, since 
$\bar{t} \in \left( \delta_1 \frac{r^p}{2}, \, \delta_1 r^p\right)$, and because of \eqref{8}, we get
\[
\frac{\delta_1}{2} (r^p+R^p) < t< 
%\delta_p r^p  \bigg( \frac{L^p-2^p\sigma^p}{2^p-1} \bigg)
\delta_1 (r^p+R^p)\,,
\]
therefore the proposition is proved after retransforming to the original coordinate system, with 
\[
\delta_o= \delta_1\left(1+\sigma^p\left(
\frac{2^{np}-1}{2^p-1}\right)\right)\,,
\]
where $\delta_1$ is provided in Lemma \ref{lem4.1}.   
\end{proof}

\begin{proof}[Proof of Theorem \ref{thm-WH}]
\noindent The proof is nowadays standard but we briefly sketch it here for the sake of completeness, and to trace the dependence on $\delta_o$. Let $(y,s)=(0,0)$ and let us define
\[I:=  \inf_{\K_{2\rho}\times (\delta_o \rho^p/2,\, \delta_o \rho^p)} u   \, .
\]
\noindent The layer-cake formula implies
\begin{equation} \label{11}
    \begin{aligned}
        \int_{\K_{\rho}} u^q(x,0)\, dx &= q \int _0^{\infty} \left|[u(\cdot, 0)>M] \cap \K_{\rho}\right| \, M^{q-1}\, dM\\
        &\leq q \int_I^\infty
        \left|[u(\cdot, 0)>M] \cap \K_{\rho}\right| \, M^{q-1}\, dM + I^q \, |\K_{\rho}|\,.
    \end{aligned}
\end{equation} By Proposition \ref{REP} applied  with 
\[
\alpha_o= \frac{|[u(\cdot, 0) >M] \cap \K_{\rho}|}{|\K_{\rho}|},
\]
there exists a number $\eta_o \in (0,1)$ such that 
\[ I \ge \eta_o M \bigg(\frac{|[u(\cdot, 0) >M]\cap \K_{\rho}|}{|\K_{\rho}|} \bigg)^d\,\]
therefore, if we choose $q<1/d$, then the integral in \eqref{11} converges and gives the desired estimate
\begin{align*}
    \int_{\K_{\rho}} u^q(x, 0)\, dx 
    &\leq q \int_I^\infty \bigg( \frac{I}{\eta_o M} \bigg)^{1/d} |\K_{\rho}| M^{q-1}\, dM + I^q |\K_{\rho}| \\
    &\leq \frac{q}{\eta_0^{1/d}}\int_I^\infty I^{1/d}\, |\K_\rho|M^{q-1-1/d}\, dM+|\K_\rho| I^q\\
    &\leq |\K_\rho|\left(\bar C+1\right)I^q\,.
\end{align*}\end{proof}

\subsection{Weak Harnack estimates imply the Harnack inequality} A consequence of the weak Harnack inequality is the Harnack inequality, when for subsolutions it is possible to obtain a refined $L^{\infty}$-bound. Hence, first we prove the aforementioned refined $L^{\infty}$ estimate, and next we show that sub-solutions always have a well-defined upper-semicontinuous representative. The two considerations together with the weak Harnack inequality will provide the full Harnack inequality.
\subsubsection{Refined $L^{\infty}$ bound}
\begin{theorem} \label{refined} Let $(x_o,t_o) \in \Omega_T$ and $\theta, \rho>0$ be such that $(x_o,t_o)+Q_{4 \rho, \theta (4 \rho)^p} \subset \Omega_T$. If $u \ge 0$ is a sub-solution to equation \eqref{prototype}, then 
\begin{equation}\label{refined-linfty}
    \gamma^{-1} \sup_{(x_o,t_o)+Q_{\rho,\theta \rho^p}} u \leq \theta^{\frac{N}{p}(\frac{1-p_N}{p_N})} \bigg( \sup_{t_o-\theta \rho^p<\tau< t_o} \dashint_{x_o+\K_{\rho}} u^q(x,\tau) \, dx \bigg)^{1/q} 
\end{equation}
    
\end{theorem}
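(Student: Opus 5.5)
The plan is a De Giorgi iteration on shrinking intrinsic anisotropic cylinders, run at the level of the energy estimates of Lemma \ref{lem2.6}. Afterwards an interpolation/covering argument (Lemma \ref{lem_iteration}) lowers the integrability exponent from a natural power down to an arbitrary $q>0$.

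The steps, in order:

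\begin{enumerate}
\item \emph{Normalisation.} Take $(x_o,t_o)=(0,0)$. Because \eqref{prototype} is $1$-homogeneous in $u$, we may rescale $u\mapsto u/M$. Setting $z_i=x_i/\rho^{p/p_i}$ and $\tau=t/\rho^p$ reduces the statement to $\rho=1$, with the cylinder $Q_{1,\theta}=\K_1\times(-\theta,0)$.

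\item \emph{Nested family of cylinders.} Fix $\frac12\le\sigma'<\sigma\le1$ and consider $Q_n=\K_{\rho_n}\times(-\theta\rho_n^p,0)$ with $\rho_n$ decreasing from $\sigma$ to $\sigma'$. Choose levels $k_n=k(1-2^{-n-1})$ and apply Lemma \ref{lem2.6} to $(u^{1/(p_N-1)}-k_n^{1/(p_N-1)})_+$, using cutoffs with $|D_i\zeta|\lesssim 2^n/(\sigma-\sigma')^{p/p_i}$ and $|\zeta_t|\lesssim 2^{np}/(\theta(\sigma-\sigma')^p)$.

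\item \emph{Taming the degenerate weight.} On the set $[u>k_n]$ we have $u\ge k/2$. Together with a sup bound $u\le M:=\sup_{Q_\sigma}u$, this lets us replace the weight $u^{(p_N-p_i)/(p_N-1)}$ by powers of $k$ and $M$. Lemma \ref{lem2.5} gives the analogous control of $g_+$ from above and below by $k^{\frac{1}{p_N-1}-1}(u^{\frac1{p_N-1}}-k^{\frac1{p_N-1}})_+^2$.

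\item \emph{Anisotropic parabolic embedding.} Apply the Troisi-type parabolic embedding (sup-in-time $L^2$ combined with the anisotropic gradient bound) to $Y_n=|[u>k_n]\cap Q_n|/|Q_n|$. This produces the recursion
\[
Y_{n+1}\le \gamma\, b^n\,\frac{\theta^{-a}}{(\sigma-\sigma')^c}\Big(\frac{M}{k}\Big)^{e}\,Y_n^{1+\frac pN}.
\]
The fast geometric convergence lemma then gives $Y_n\to0$, provided
\[
Y_0\le \gamma^{-1}\theta^{a'}(\sigma-\sigma')^{c'}(k/M)^{e'}.
\]
Using $Y_0\le k^{-1}\dashint\!\!\dashint_{Q_\sigma} u$, we conclude
\[
\sup_{Q_{\sigma'}}u\le \varepsilon M+\gamma(\varepsilon)\,\theta^{-\frac Np\cdot\frac{p_N-1}{p_N}}(\sigma-\sigma')^{-c}\sup_\tau\Big(\dashint_{\K_1}u^q(\cdot,\tau)\,dx\Big)^{1/q}.
\]
This comes from Young's inequality and the splitting $\dashint u=\dashint u^{q}u^{1-q}\le M^{1-q}\dashint u^q$.
\end{enumerate}

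To close the argument, set $\sigma_j=1-2^{-j-1}$ and $Y_j=\sup_{Q_{\sigma_j}}u$. The inequality above reads $Y_j\le\varepsilon Y_{j+1}+\S b^j$, and Lemma \ref{lem_iteration} with $\varepsilon b<1$ yields \eqref{refined-linfty}.

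The main obstacle is the bookkeeping in step 3. The exponents of $M/k$ and of $\theta$ in the recursion must combine to exactly $\theta^{\frac Np(\frac{1-p_N}{p_N})}$, and this forces the choice of intrinsic scaling. I would first try the intrinsic cubes $\K^k_r$ of Section \ref{Notation}, choosing $k$ so that the time scale $\theta$ balances the spatial anisotropic factors $k^{(p_N-p_i)/p_i}$. Only after that would I undo the scaling to compare $\K^k_1$ with $\K_1$.
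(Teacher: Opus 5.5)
\textbf{There is a gap in Steps 3--4.} Your closing step (interpolation between $\sup u$ and $u^q$, Young's inequality, Lemma \ref{lem_iteration} along $\sigma_j=1-2^{-j-1}$) is the same as the paper's, but everything before it is only sketched. The recursion is never derived: $a,c,e$ remain placeholders. You also leave the anisotropic bookkeeping, with the weights $u^{(p_N-p_i)/(p_N-1)}$ and different $p_i$, as an unresolved ``main obstacle'', to be handled by intrinsic cubes $\K^k_r$ that you never set up. The paper does not redo this De Giorgi iteration. It imports the $L^\infty$--$L^{p_N/(p_N-1)}$ estimate \eqref{eq:sup} from \cite[Lemma 3.7]{CHSS}, where the dependence $(\theta|\rho_1-\rho_0|^p)^{-(N+p)/p}$ is already explicit. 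It then only needs $\iint u^{\frac{p_N}{p_N-1}}\le M_{n+1}^{\frac{p_N}{p_N-1}-q}\iint u^q$ on the nested cylinders. You should either quote that estimate or actually run the iteration. A small slip as well: Lemma \ref{lem2.5} with $m=1/(p_N-1)$ gives the factor $(k^m+u^m)^{p_N-2}$, i.e.\ $k^{1-\frac{1}{p_N-1}}$ up to powers of $M/k$, not $k^{\frac{1}{p_N-1}-1}$.

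\textbf{The $\theta$-exponent you claim at the end of Step 4 cannot be reached.} The power $\theta^{-\frac Np\frac{p_N-1}{p_N}}$ does not come out of Young's inequality, and no intrinsic scaling will change that. Suppose the De Giorgi step gives $k^{1+e'}\simeq\theta^{-a'}M^{e'}\dashint\!\!\dashint u$ and you use $\dashint\!\!\dashint u\le M^{1-q}\sup_\tau\dashint u^q$. Then Young yields $\varepsilon M+C(\varepsilon)\,\theta^{-a'/q}\big(\sup_\tau\dashint u^q\big)^{1/q}$, so the power of $\theta$ depends on $q$. The same happens in the paper. The factor $\theta^{-\frac Np\frac{p_N-1}{p_N}}$ in the penultimate line of \eqref{12}, raised to the conjugate exponent $\frac{p_N}{q(p_N-1)}$, becomes $\theta^{-\frac{N}{pq}}$; the last line of \eqref{12} simply drops it.

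In fact the $q$-independent power in \eqref{refined-linfty} is false in general. Take $p_i\equiv 2$ and $N\ge 3$ (the heat equation), and let $u$ be the heat kernel with pole at $(x_o,t_o-17\theta\rho^2)$. Then $\sup_{(x_o,t_o)+Q_{\rho,\theta\rho^2}}u\simeq(\theta\rho^2)^{-N/2}$, while $\big(\sup_\tau\dashint_{x_o+\K_\rho}u^q\big)^{1/q}\lesssim\rho^{-N}\theta^{\frac{N}{2q}-\frac N2}$. The ratio is therefore $\gtrsim\theta^{-N/(2q)}$, which is not $O(\theta^{-N/4})$ as $\theta\to0$ when $q<2=p_N/(p_N-1)$. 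Also, $u\equiv 1$ shows that $\theta$ must be bounded above.

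So the exponent you are trying to force is the wrong target. What the paper's argument (given \eqref{eq:sup}) really yields, and what you should aim for, is the following: for $\theta\le 1$,
\[
\sup_{(x_o,t_o)+Q_{\rho,\theta\rho^p}}u\le\gamma\,\theta^{-\frac{N}{pq}}\bigg(\sup_{t_o-\theta(2\rho)^p<\tau<t_o}\dashint_{x_o+\K_{2\rho}}u^q(x,\tau)\,dx\bigg)^{1/q}.
\]
Note the averages are over the larger cylinder. That is what both your iteration and the paper's expanding $Q_n$ actually produce, not averages over the same $\K_\rho$. This weaker form suffices for the Harnack argument, where $\theta=\delta_o/4$ is fixed.
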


\begin{proof} Let $(x_o,t_o)=(0,0)$ and let us refine the estimate of \cite[Lemma 3.7]{CHSS}, which states that there exist two constants $\gamma, \alpha>0$ depending only on the data $\{N,p_i\}$ such that for any $\sigma \in (0,1)$ we can estimate
\begin{equation}\label{eq:sup}
     \sup_{Q_{\rho_0, \, \theta (\rho_0)^p}} u  \leq \gamma \bigg[\bigg( \frac{1}{\theta\, |\rho_1-\rho_0|^p} \bigg)^{\frac{N+p}{p}} \iint_{Q_{\rho_1, \, \theta \rho_1^p}} u^{\frac{p_N}{p_N-1}} \, dxdt \bigg]^{\frac{p_N-1}{p_N}}\,.
\end{equation} 
Let's define, for every $n\in \N$,
\[
Q_{n}:=Q_{(2-2^{-n}) \rho, \, \theta ((2-2^{-n}) \rho)^p}, \qquad 
M_n:=\sup_{Q_n}u,\qquad
\S:= \sup_{-\theta \rho^p < \tau< 0}\, \dashint_{\K_{\rho}} u^q(x,\tau)\, dx  \,\,.
\] 
Moreover we observe that
$2-2^{-n-1}-(2-2^{-n})=2^{-n-1}$,
and we set
\[
\mu:={\left(\frac{1}{2}\right)}^{(N+p)\frac{p_N-1}{p_N}}.
\]
%\[
%\frac{2-2^{-n}}{2-2^{-n-1}}=1-\mu_n\ \Rightarrow \mu_n=\frac{2^{-n-1}}{1-2^{-n-1}}.
%\]
Therefore, by \eqref{eq:sup} and Young's inequality,
we get
\begin{equation}\label{12}
    \begin{aligned}
M_n=\sup_{Q_n} u 
&\leq \frac{\gamma}{\mu^n} \bigg[ \frac{1}{\theta^{\frac{N+p}{p}}}\frac{1}{\rho^{N+p}} \iint_{Q_{n+1}} u^{\frac{p_N}{p_N-1}} \, dxdt \bigg]^{\frac{p_N-1}{p_N}}\\
& \leq \frac{\gamma}{\mu^n} \bigg[ \theta^{-\frac{N+p}{p}}\bigg( \sup_{Q_{n+1}} u \bigg)^{\frac{p_N}{p_N-1}-q}  \bigg( \theta \sup_{- \theta \rho^p<\tau<0} \dashint_{\K_{\rho}} u^{q}(x,\tau) \, dx \bigg)\bigg]^{\frac{p_N-1}{p_N}}\\
&= \frac{\gamma}{\mu^n} \theta^{-\frac{N}{p}(\frac{p_N-1}{p_N})}  M_{n+1}^{1-\frac{q(p_N-1)}{p_N}} \S^{q(\frac{p_N-1}{p_N})}\\
&\leq \varepsilon M_{n+1}+\frac{\gamma C(\varepsilon)}{\overline \mu^n}\S
    \end{aligned}
\end{equation} for eventually  a different $\gamma >0$.
Here
\[
\overline \mu=\mu^\frac{p_N}{q(p_N-1)}=\left(\frac{1}{2}\right)^{\frac{N+p}{q}}<1,
\]
therefore we can iterate Lemma \ref{lem_iteration} with $Y_n= M_n,\ b= \overline\mu^{-1}$, and this finishes the job. 
\end{proof}
\subsubsection{Semicontinuity} \label{sec-semi}
In this subsection we prove that a local weak subsolution $u \ge 0$ to equation \eqref{prototype} has an upper-semicontinuous representative. Observe that, if $-u$ would have been a super-solution to \eqref{prototype}, then our result would have been a consequence of Lemma (De Giorgi-Type) of \cite{CHSS} and \cite{Liao}. Nevertheless, the structure of equation \eqref{prototype} does not allow this trick, so we need to adapt the reasoning of the latter to our specific case. The main tool needed is a De Giorgi type Lemma in whole cylinders, recalling that nonnegative local weak sub-solutions to \eqref{prototype} are locally bounded. This Lemma can be found in \cite{JEE}, Lemma 2.5, by setting $m=1/(p_N-1)$.

\begin{lemma}[De Giorgi-type Lemma] \label{DG-lemma} Let $a, \xi \in (0,1)$ and $u \ge 0$ a local weak sub-solution to \eqref{prototype} in $\Omega_T$. Let $(x_o,t_o) \in \Omega_T$ and $\rho >0$ be a radius such that 
\[ Q_{4\rho}(x_o,t_o) = \K_{4\rho}(x_o) \times (t_o-(4\rho)^p, \, t_o + (4\rho)^p) \subseteq \Omega_T\,\]
and let us define numbers
\[ \mu^+= \sup_{Q_{4\rho}(x_o,t_o)} \, u \, , \qquad \text{and} \qquad  \omega= \mu^+- \inf_{Q_{4\rho}(x_o,t_o)} \, u \, .\]
\noindent Then, there exists a number $\nu \in (0,1)$ depending only on the data and $a,\xi, \mu^+, \omega$, but not on $\rho$, such that if the measure information
\begin{equation}\label{measure-DG+}
\left|Q_{\rho}(x_o,t_o) \cap\bigg[ u^{\frac{1}{p_N-1}} \ge [\mu^+]^{\frac{1}{p_N-1}}- [\xi \omega ]^{\frac{1}{p_N-1}}\bigg] \right| \leq \nu |Q_{\rho}|,    
\end{equation} is valid, then 
\begin{equation}\label{TH-DG}
    \sup_{Q_{\rho/2}} \, u^{\frac{1}{p_N-1}}\, \, \leq [\mu^+]^{\frac{1}{p_N-1}}- [a \xi \omega]^{\frac{1}{p_N-1}} \, \, .
\end{equation}
\end{lemma}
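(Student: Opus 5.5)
We prove the De Giorgi-type Lemma by the standard De Giorgi iteration on shrinking cylinders, working with the function $w:=u^{\frac{1}{p_N-1}}$ and super-level truncations near its supremum. Let $M:=[\mu^+]^{\frac{1}{p_N-1}}$. For $n\ge 0$ we define the levels
\[
k_n:=M-[a\xi\omega]^{\frac{1}{p_N-1}}-\big([\xi\omega]^{\frac{1}{p_N-1}}-[a\xi\omega]^{\frac{1}{p_N-1}}\big)2^{-n},
\]
so that $k_0=M-[\xi\omega]^{\frac{1}{p_N-1}}$ and $k_n\nearrow M-[a\xi\omega]^{\frac{1}{p_N-1}}$. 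We also set radii $\rho_n=\rho/2+\rho 2^{-n-1}$, cylinders $Q_n=\K_{\rho_n}(x_o)\times(t_o-\rho_n^p,t_o]$ (or the symmetric version used in the statement), and $Y_n=|Q_n\cap[w>k_n]|/|Q_n|$. Cutoffs $\zeta_n$ between $Q_{n+1}$ and $Q_n$ satisfy $|D_i\zeta_n|\le \gamma 2^{n}\rho^{-p/p_i}$ and $|\partial_t\zeta_n|\le\gamma 2^{np}\rho^{-p}$. The goal is the recursive inequality $Y_{n+1}\le C\,b^n Y_n^{1+\kappa}$ with $\kappa>0$ depending on $N,p_i$, and with $C$ depending on the data and on $a,\xi,\mu^+,\omega$. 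The classical fast geometric convergence lemma then gives $Y_n\to0$ provided $Y_0\le\nu:=C^{-1/\kappa}b^{-1/\kappa^2}$, which is exactly \eqref{TH-DG}.

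The energy step applies Lemma \ref{lem2.6} with the ``$+$'' sign and the level $k$ such that $k^{\frac{1}{p_N-1}}=k_n$. We need $k_n>0$; if $\xi\omega$ is comparable to $\mu^+$ we replace $k_0$ by $\max\{k_0,M/2\}$, or simply note that the constant is allowed to depend on $\mu^+,\omega$. On the set $[w>k_n]$ we have $w\in(k_0,M]$, with $k_0$ bounded below in terms of $\mu^+,\omega,\xi$ (or controlled by the previous remark). Hence:
\begin{itemize}
\item the weights $u^{\frac{p_N-p_i}{p_N-1}}=w^{p_N-p_i}$ are bounded above and below by constants depending on $\mu^+,\xi,\omega$;
\item by Lemma \ref{lem2.5}, $g_+(w,k_n)\simeq (w-k_n)_+^2$ up to such constants.
\end{itemize}
The energy estimate therefore becomes a standard anisotropic $L^2$--$W^{1,\mathbf p}$ energy inequality for $(w-k_n)_+$. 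The right-hand side is bounded by $\gamma\,2^{np_N}\rho^{-p}(M-k_0)^{\max}\,|Q_n\cap[w>k_n]|$, since $(w-k_n)_+\le M-k_0$ and $\rho^{-p/p_i\cdot p_i}=\rho^{-p}$. The degenerate dependence on the size of $(w-k_n)_+$ across the different $p_i$ powers is absorbed into the constant, because all these quantities are bounded in terms of $\xi\omega$ and $\mu^+$.

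We then combine this with the anisotropic parabolic Sobolev embedding for $(w-k_n)_+\zeta_n$, which gives an $L^{q}$ norm with $q=p\frac{N+2}{N}$ (in the scale of the harmonic mean $p$) bounded by the energy. Using $(w-k_{n+1})_+\ge k_{n+1}-k_n\simeq 2^{-n}[\xi\omega]^{\frac{1}{p_N-1}}(1-a^{\frac{1}{p_N-1}})$ on $[w>k_{n+1}]$ and H\"older's inequality, we obtain
\[
Y_{n+1}\le \gamma(\text{data},a,\xi,\mu^+,\omega)\,b^n\,Y_n^{1+\frac{p}{N+p}} .
\]
The powers of $\rho$ cancel because the cylinders have the natural scaling $\K_\rho\times\rho^p$ and the equation is homogeneous (no intrinsic scaling is needed, as the truncation level sits near the supremum and the constants are allowed to depend on $\mu^+,\omega$). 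This yields the independence of $\nu$ from $\rho$.

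The main obstacle is the mismatch of homogeneities among the $p_i$ terms, together with the weight $w^{p_N-p_i}$, in the embedding step. The energy controls $\sum_i\|D_i(w-k)_+\|_{p_i}^{p_i}$ with different exponents, whereas the anisotropic embedding wants a product $\prod_i\|D_i\cdot\|_{p_i}^{1/N}$. To handle this, we would normalize by $\tilde w=(w-k_n)_+/(M-k_0)$, so that all quantities lie in $[0,1]$ and every term is bounded by a constant times the common right-hand side. We then apply the embedding in the form $\|\tilde w\zeta\|_{L^{q}}^{q}\le\gamma\big(\prod_i\|D_i(\tilde w\zeta)\|_{p_i}\big)^{p/N\cdot(\dots)}\sup_t\|\tilde w\|_2^{(\dots)}$ and bound each factor by the energy. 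This is exactly where the dependence of $\nu$ on $\mu^+,\omega,\xi$ (but not $\rho$) enters.
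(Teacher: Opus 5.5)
Your proposal is correct and is the standard De Giorgi iteration: the energy estimate of Lemma~\ref{lem2.6}, with the weights and $g_+$ comparable to constants on $[w>k_0]$; the anisotropic parabolic embedding, whose $\rho$-powers cancel under the scaling $x_i\mapsto\rho^{p/p_i}x_i$, $t\mapsto\rho^p t$; and fast geometric convergence. This is what the paper relies on: it gives no proof of its own and simply invokes Lemma~2.5 of \cite{JEE} with $m=1/(p_N-1)$. Two minor corrections: since $\omega\le\mu^+$ and $\xi<1$ you automatically have $k_0\ge[\mu^+]^{\frac{1}{p_N-1}}\big(1-\xi^{\frac{1}{p_N-1}}\big)>0$, so no replacement of $k_0$ is needed; and in the Chebyshev step it is $(w-k_n)_+$, not $(w-k_{n+1})_+$, that is at least $k_{n+1}-k_n$ on $[w>k_{n+1}]$.
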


\begin{proposition} \label{semicontinuity} Local weak sub-solution $u \ge 0$ to equation \eqref{prototype} is upper-semicontinuous. \end{proposition}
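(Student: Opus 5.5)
The plan is to follow the scheme of \cite{Liao} (see also \cite{CHSS}), adapted to sub-solutions through Lemma \ref{DG-lemma}. We will show that the function
\[
u^*(x_o,t_o):=\lim_{\rho\to 0}\ \operatorname*{ess\,sup}_{Q_{\rho}(x_o,t_o)} u ,
\]
which is defined at every point $(x_o,t_o)\in\Omega_T$ because sub-solutions are locally bounded, coincides with $u$ almost everywhere. Once this is known, $u^*$ is the desired representative. Indeed, $u^*$ is upper-semicontinuous by construction. Given $(x_o,t_o)$ and $\varepsilon>0$, pick $\rho$ with $\operatorname{ess\,sup}_{Q_\rho(x_o,t_o)}u<u^*(x_o,t_o)+\varepsilon$. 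Every point $(x,t)$ close enough to $(x_o,t_o)$ has a small cylinder $Q_{\rho'}(x,t)\subset Q_\rho(x_o,t_o)$, so $u^*(x,t)\le u^*(x_o,t_o)+\varepsilon$. Since $u^*\ge u$ at Lebesgue points, it remains to prove $u^*(x_o,t_o)\le u(x_o,t_o)$ at every Lebesgue point of $u^{1/(p_N-1)}$.

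Fix such a Lebesgue point $(x_o,t_o)$ and argue by contradiction: assume $u^*(x_o,t_o)>u(x_o,t_o)+2\epsilon$ for some $\epsilon>0$. Fix a small $\rho_o$ with $Q_{4\rho_o}(x_o,t_o)\subset\Omega_T$, and set $\mu^+=\sup_{Q_{4\rho_o}}u$ and $\omega=\mu^+-\inf_{Q_{4\rho_o}}u$. For $\rho\le\rho_o$ both quantities are bounded by their values at scale $\rho_o$. We want to choose $\xi$ so that the level
\[
[\mu^+]^{\frac{1}{p_N-1}}-[\xi\omega]^{\frac{1}{p_N-1}}
\]
lies strictly above $u(x_o,t_o)^{\frac{1}{p_N-1}}+\kappa$ for some $\kappa>0$. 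We also want $a$ chosen so that
\[
[\mu^+]^{\frac{1}{p_N-1}}-[a\xi\omega]^{\frac{1}{p_N-1}}<(u(x_o,t_o)+\epsilon)^{\frac1{p_N-1}}\,(\le u^*(x_o,t_o)^{\frac1{p_N-1}}).
\]
This is the delicate point. The cleanest way to handle it is to apply Lemma \ref{DG-lemma} not with $\mu^+$ of the big cylinder but at scale $\rho$, with $\mu^+_\rho=\sup_{Q_{4\rho}}u$, which is decreasing toward $u^*(x_o,t_o)$, and $\omega_\rho\le\omega$. In that version we pick $\xi$ so that $\xi\omega_\rho$ is comparable to $\mu^+_\rho-u(x_o,t_o)-\epsilon$, which is $\ge\epsilon$ for all small $\rho$. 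Because $\nu$ depends only on the data, $a$, $\xi$, $\mu^+$ and $\omega$, and these all range over a compact set bounded away from degeneracy, we can fix one $\nu>0$ uniformly in $\rho$. The main obstacle is exactly this uniformity, since Lemma \ref{DG-lemma} is not homogeneous in $\mu^+$ and $\omega$. I would handle it either by tracking the dependence of $\nu$ on $(\mu^+,\omega)$ through the proof in \cite{JEE}, where it enters monotonically through powers, or by freezing $\mu^+,\omega$ at the reference scale and using only the monotonicity of the level sets in $\mu^+$.

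With $\nu$ fixed, the Lebesgue point property gives
\[
\dashint_{Q_\rho}\big|u^{\frac1{p_N-1}}-u(x_o,t_o)^{\frac1{p_N-1}}\big|\,dx\,dt\to0 .
\]
By Chebyshev, the measure of the set where $u^{1/(p_N-1)}$ exceeds $u(x_o,t_o)^{1/(p_N-1)}+\kappa$ is at most $\nu|Q_\rho|$ for all small $\rho$, so hypothesis \eqref{measure-DG+} holds. Then \eqref{TH-DG} gives
\[
\sup_{Q_{\rho/2}}u^{\frac1{p_N-1}}\le[\mu^+]^{\frac{1}{p_N-1}}-[a\xi\omega]^{\frac{1}{p_N-1}}<u^*(x_o,t_o)^{\frac1{p_N-1}},
\]
which contradicts the definition of $u^*$ as the limit of the essential suprema. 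Therefore $u^*=u$ at every Lebesgue point, hence almost everywhere, and $u^*$ is the upper-semicontinuous representative.
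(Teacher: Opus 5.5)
Your proposal is essentially the paper's argument. It uses the same ess-limsup regularization $u^*$ and a contradiction at a Lebesgue point, extracts the measure hypothesis \eqref{measure-DG+} from the Lebesgue-point property (the paper argues by contradiction, you by Chebyshev), and lets Lemma \ref{DG-lemma} force $\sup_{Q_{\rho/2}}u<u^*(x_o,t_o)$.

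The uniformity issue you flag is settled in the paper by your second option: it freezes $\mu^+,\omega,\xi,a$ on one reference cylinder $Q_R$, so $\nu$ is a single constant, and neither a scale-dependent $\mu^+_\rho$ nor tracking $\nu$ through \cite{JEE} is needed. As in the paper, require only that both levels lie between $u(x_o,t_o)^{\frac{1}{p_N-1}}$ and $u^*(x_o,t_o)^{\frac{1}{p_N-1}}$, with $R$ small, rather than below $(u(x_o,t_o)+\epsilon)^{\frac{1}{p_N-1}}$. For $p_N<2$ the levels $[\mu^+]^{\frac{1}{p_N-1}}-[\xi\omega]^{\frac{1}{p_N-1}}$, $\xi\in(0,1)$, need not go that low, so your threshold could fail to be reachable.
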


\begin{proof} The upper semicontinuous regularisation of $u$ in $\Omega_T$ can be defined as
\[u^*(x,t)= \text{ess} \, \limsup_{(y,s) \rightarrow (x,t)}\,u(y,s)\, = \lim_{\rho \downarrow 0} \left( {\text{ess}\sup}_{Q_{\rho}(x,t)}\, u  \right), \qquad \text{for all} \quad (x,t) \in \Omega_T\,.\]
 Since $u\ge 0$ is a local weak sub-solution to \eqref{prototype} then $u$ is locally bounded \cite{JDE} and $u^*: \Omega_T \rightarrow \R$ is well-defined at every point. Since $u \in L_{loc}^1(\Omega_T)$, we introduce the set of Lebesgue points of $u$ in $\Omega_T$, 
 \[\mathcal{L}(u,\Omega_T) := \bigg{\{} (x,t) \in \Omega_T\,:\, \lim_{\rho \downarrow 0} \dashint \dashint_{Q_{\rho}(x,t)} |u(y,s)-u(x,t)|\, dyds =0\,\bigg{\}}\,.\]
Observe
that $Q_{\rho}$ is the ball in $\R^{N+1}$ of radius $\rho^p$ with respect to the distance
\[ d\left((x,t),\, (y,s)\right)= \max \bigg{\{} |t-s|,\, |x_i-y_i|^{p_i}:\ i=1, \dots, N\bigg{\}},\]
and that $\Omega_T$ with the Lebesgue measure and this metric is a metric measure space. Hence by  
\cite{BjornBjorn} we have $|\mathcal{L}(u,\Omega_T)| =|\Omega_T|$. We want to show that $u=u^*$ on a subset of full measure of $\Omega_T$, in particular we choose to prove
the double inequality on $\mathcal L(u,\Omega_T)$. It is obvious that $u^* \ge u$ in $\mathcal{L}(u,\Omega_T)$ since
\[u^*(x,t)= \lim_{\rho \downarrow 0} \bigg( \sup_{Q_{\rho}(x,t)}\, u \bigg) \ge \lim_{\rho \downarrow 0} \dashint\dashint_{Q_{\rho}(x,t)} u(y,s)\, dyds = u(x,t)\,.\]
\noindent Let us suppose, 
by contradiction,
that the converse inequality does not hold at $(x_o,t_o) \in \Omega_T$, i.e.
\[u(x_o,t_o) < u^*(x_o, t_o)\,.\]
Let $R>0$ be such that 
\[ Q_{R}(x_o,t_o)= \K_{R}(x_o) \times (t_o-R^p,\, t_o+R^p) \subset \Omega_T,\]
and for 
\[\mu^+= \sup_{Q_R(x_o,t_o)} \, u\,,\qquad \omega= \mu^+- \inf_{Q_R(x_o,t_o)}\, u\,,\]
let $\xi \in (0,1)$, be such that 
\begin{equation}
    u(x_o,t_o)^{\frac{1}{p_N-1}}< [\mu^+]^{\frac{1}{p_N-1}}-[\xi \omega]^{\frac{1}{p_N-1}} < u^*(x_o,t_o)^{\frac{1}{p_N-1}}\leq [\mu^+]^{\frac{1}{p_N-1}}\,.
\end{equation} 
\noindent We can now choose $a \in (0,1)$ such that
\begin{equation}\label{eq:assumption}
    [\mu^+]^{\frac{1}{p_N-1}}-[a\xi \omega]^{\frac{1}{p_N-1}} < u^*(x_o,t_o)^{\frac{1}{p_N-1}}\,.
\end{equation}
If $\nu \in (0,1)$ is that of Lemma \ref{DG-lemma}, we claim that for some small radius $\rho \in (0,R/4)$,
the condition
\eqref{measure-DG+}
must hold.
Indeed,
if this is not the case, then
\begin{equation*}
    \begin{aligned}
        \iint_{Q_{\rho}(x_o,t_o)} &|u(x_o,t_o)- u(y,s)| \,dyds \\
        &\ge \iint_{\left\{u^{\frac{1}{p_N-1}} \ge [\mu^+]^{\frac{1}{p_N-1}}-[\xi \omega]^{\frac{1}{p_N-1}}\right\}} \bigg[  \bigg([\mu^+]^{\frac{1}{p_N-1}}-[\xi \omega]^{\frac{1}{p_N-1}}\bigg)^{p_N-1}- u(x_o,t_o)   \bigg]\, dyds \\
        &\ge \nu |Q_{\rho}| \, \bigg[  \bigg([\mu^+]^{\frac{1}{p_N-1}}-[\xi \omega]^{\frac{1}{p_N-1}}\bigg)^{p_N-1}- u(x_o,t_o)   \bigg] >0\,,
    \end{aligned}
\end{equation*} and by letting $\rho$ vanish we contradict the assumption $(x_o,t_o) \in \mathcal{L}(u,\Omega_T)$. We consider $\rho_0 \in (0,R/4)$ such that \eqref{measure-DG+} is valid. By applying Lemma \ref{DG-lemma} and 
considering the assumption \eqref{eq:assumption},
we obtain
 the estimate
\[
\sup_{Q_{\rho_0/2}}u^{\frac{1}{p_N-1}} \leq [\mu^+]^{\frac{1}{p_N-1}}-[a\xi\omega]^{\frac{1}{p_N-1}} < u^*(x_o,t_o)^{\frac{1}{p_N-1}},\]
which
contradicts the very definition of $u^*(x_o,t_o)$.
\end{proof}

 \noindent Finally we can prove the Harnack inequality, without using any Krylov-Safonov covering argument. 

\subsubsection{Proof of the Harnack's inequality}\label{sec-Harnack} Let us fix $(x_o,t_o) \in \Omega_T$ and $\rho>0$ such that 
 
 \[ \K_{4\rho, \theta (4\rho)^p}(x_o) \times (t_o- \theta (4\rho)^p, \, t_o) \subset \Omega_T\,.\]
 By Theorem \ref{refined} we have that there exists a constant $\gamma>0$ such that 

 \begin{equation*}
\gamma^{-1} \sup_{Q_S} u \leq  \bigg( \sup_{t_o-\theta \rho^p<t<t_o} \, \, \dashint_{\K_{\rho}(x_o)} u^q(x,t)\, dx \bigg)^{1/q}\,,
 \end{equation*} for 
 \[Q_S= (x_o,t_o)+ Q_{\rho, \theta \rho^p}\,.\]
 \noindent Now, since $u:\Omega_T \rightarrow [0, \infty)$ has an upper-semicontinuous representative $u^*$, there exists a time $\tau_{\S} \in [t_o-\theta \rho^p, \, t_o]$ such that the supremum in time of the function $t \rightarrow \|u (\cdot, t)\|_{q, \K_{\rho}}$ is attained,
\begin{equation}\label{12.2}
\gamma^{-1} \, \, \sup_{Q_S} u  
\leq  \, \, 
\bigg( \dashint_{\K_{\rho}(x_o)} u^q(x,\tau_{\S})\, dx \bigg)^{1/q}\,.
\end{equation}
Now we apply the weak Harnack inequality \eqref{weak-harnack} to the right-hand side of \eqref{12.2}, %\eqref{eq:sup},
to obtain 

 \begin{equation}\label{H}
 \sup_{Q_S} u \leq \gamma\, \,  \inf_{Q_I} u\, ,      
 \end{equation} with 
 \[Q_I= (x_o,\tau_S)+ \K_{2\rho} \times \left(\delta_o \rho^p /2, \, \delta_o \rho^p\right)\,.\]

\medskip

\noindent By choosing $\theta=\frac{\delta_o}{4}$, we get that necessarily (See Figure \ref{figAA})
\[
\left[t_o+\frac{\delta_o}{2}\rho^p,\, t_o+\frac{3\delta_o}{4}\rho^p\right]\subset \left[\tau_S+\frac{\delta_o}{2}\rho^p,\, \tau_S+\delta_o\rho^p\right]\,,
\]
hence, 
\begin{equation}
    \label{Harnack2}
\sup_{(x_o,t_o)+Q_{\rho,\frac{\delta_o}{4}\rho^p}}u\leq 
\gamma  \inf_{(x_o,t_o)+\K_{\rho}\times(\frac{\delta_o}{2}\rho^p,\frac{3\delta_o}{4}\rho^p)}u \,.
\end{equation}

\begin{figure}    
    \centering

\begin{tikzpicture}[>=stealth, line width=0.9pt, line join=round]

  % ---- Rettangolo esterno (cilindro grande) ----
  \draw (-3,-3.2) rectangle (3,2.8);

  % ---- Q_J : rettangolo superiore (interno, sopra l'asse x) ----
  \draw[fill=blue!20] (-1.7,1.1) rectangle (1.7,2.2);
  \node at (1.0,1.65) {$Q_I$};

  % ---- Q_S : rettangolo inferiore (interno, lato superiore = asse x) ----
  \draw[fill=blue!20]  (-1.7,-2.8) rectangle (1.7,0);
  \node at (1.0,-1.4) {$Q_S$};

  % ---- Assi (continuano a sinistra e in basso) ----
  \draw[->] (0,-3.8) -- (0,3.4)  node[left=2pt] {$t$};   % asse t
  \draw[->] (-5.3,0) -- (5.3,0)  node[above right] {$x$}; % asse x

  % ---- Formula: livello del lato inferiore del rettangolo grande ----
  \node at (-0.75,-3.5){\tiny $t_o-\theta(4\rho)^p$};

\end{tikzpicture}

    \caption{Representation of Harnack's inequality in the form \eqref{Harnack2}}\label{figAA}
\end{figure}

\begin{remark}
    Instead of selecting an upper semicontinuous representative $u^*$ of $u$ we may, in principle, just choose a smaller $q$ in Theorem \ref{thm-WH}, i.e. $q<p_N/(p_N-1)$. In this way, by the very definition of solution, one has $u\in C(0,T; L^q_{loc}(\Omega))$. The importance of selecting $u^*$ allows for the chaining procedure to  be done for every $0<q<1/d$. Moreover, we use $u^*$ to make sense of \eqref{HI} and the point-wise estimates of Section \ref{Section-Sub-Potential}.
\end{remark}
\section{Sub-Potential Lower Bounds and Time-Decay} \label{Section-Sub-Potential}
\noindent  If we set $\theta = 3\delta_o/4$ we obtain from \eqref{Harnack2} the following version of the Harnack inequality (c.f. \cite{CHSS}),
\begin{equation}\label{HI}
 u(x_0, t_0)\leqslant C\, \inf\limits_{\K_\rho(x_0)} u(\cdot,\,  t_0+\theta\,\rho^{p}),
\end{equation} for nonnegative solutions to \eqref{prototype}, provided that the following inclusion is satisfied
\[
\K_{8\rho}(x_0) \times (t_0- \theta (8\rho)^{p}, t_0+ \theta(8\rho)^{p}) \subset \Omega_T \ . \]

% \begin{theorem}\label{thm-Harnack}
% Let $u$ be a non-negative, local weak solution to \eqref{prototype} in $\Omega_T$. There exist positive constants $C$, $\theta$,  depending only on the data such that, for all cylinders \[
% \K_{8\rho}(x_0) \times (t_0- \theta (8\rho)^{p}, t_0+ \theta(8\rho)^{p}) \subset \Omega_T \ , \] there holds
% \begin{equation}\label{HI}
% \frac{1}{C}\sup\limits_{\K_\rho(x_0)}u(\cdot,\,  t_0-\theta \,\rho^{p})\leqslant u(x_0, t_0)\leqslant C\, \inf\limits_{\K_\rho(x_0)} u(\cdot,\,  t_0+\theta\,\rho^{p}),
% \end{equation} 
% where
% \[
% \K_{\rho}(x_0):=\prod_{i=1}^N \bigg\{|x_i-{x_0}_i|<\rho^{\frac{p}{p_i}}\bigg\}\,.\]
% \end{theorem}
% \noindent  

\begin{lemma} Let $u$ be a positive local weak solution of \eqref{prototype} in $\Omega_T= \Omega \times (0,T]$ with $\Omega$ open bounded set of $\R^N$. Let $(x,t)$, $(y,s)$ be points in $\Omega_T$ satisfying $s>t$ and let $\rho$ be the biggest number such that \[\K_{8\rho}(y) \times (s, s+\theta (8\rho)^p)\subset \Omega\times (0,T]\,.\]  Then we have
\begin{equation}\label{sblb-monster}
      u(y,s) \ge u(x,t) \, \, \gamma^{-1} \exp\bigg(- \bigg[ \frac{(s-t)}{\theta \rho^p}+\sum_i \bigg(\frac{|x_i-y_i|}{(8\rho)^{1/p_i}}  \bigg)  +  \bigg(\frac{|x_i-y_i|^{p_i}}{(s-t)} \bigg)^{\frac{1}{p_i-1}} \bigg]\bigg)\,.
\end{equation}  
\end{lemma}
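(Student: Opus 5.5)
The plan is a Harnack chain: apply \eqref{HI} a finite number $n$ of times along a discrete path joining $(x,t)$ to $(y,s)$, with a common radius $r\le \rho$. Each application costs one factor $C$, so the lower bound will be $u(y,s)\ge C^{-n}u(x,t)$. The exponent in \eqref{sblb-monster} should then come out of choosing $n$ as small as the geometry allows, and writing $C^{-n}=\exp(-n\log C)$.

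\emph{Setting up the chain.} Fix $n\in\N$ and let $r$ be defined by $n\,\theta r^p=s-t$. Put $t_j=t+j\theta r^p$ and $x^{(j)}=x+\tfrac{j}{n}(y-x)$ for $j=0,\dots,n$, so that $x^{(0)}=x$, $x^{(n)}=y$, $t_0=t$ and $t_n=s$. Step $j\to j+1$ of \eqref{HI}, centred at $(x^{(j)},t_j)$, gives
\[
u(x^{(j)},t_j)\le C\inf_{\K_r(x^{(j)})}u(\cdot,t_{j+1}).
\]
This yields $u(x^{(j+1)},t_{j+1})\ge C^{-1}u(x^{(j)},t_j)$ provided $x^{(j+1)}\in\K_r(x^{(j)})$, that is,
\[
\frac{|x_i-y_i|}{n}<r^{p/p_i}\qquad\text{for every } i .
\]
The pointwise values make sense because we use the upper semicontinuous representative $u^*$ and the remark after \eqref{Harnack2}. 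Chaining the $n$ steps gives $u(y,s)\ge C^{-n}u(x,t)$.

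\emph{Choosing $n$.} We need $n$ to satisfy three groups of constraints.
\begin{itemize}
\item[(a)] $r\le\rho$ (up to a fixed factor), so that every step is admissible. This means $n\ge (s-t)/(\theta\rho^p)$.
\item[(b)] For each direction $i$, substituting $r^p=(s-t)/(n\theta)$ into $|x_i-y_i|<n r^{p/p_i}$ gives $|x_i-y_i|^{p_i}<n^{p_i-1}(s-t)/\theta$. This holds once $n\gtrsim \big(|x_i-y_i|^{p_i}/(s-t)\big)^{1/(p_i-1)}$.
\item[(c)] Possibly a constraint of the form $n\gtrsim |x_i-y_i|/(8\rho)^{1/p_i}$. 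This would ensure that the per-step spatial displacement $|x_i-y_i|/n$ is small compared with the admissible size $\rho^{p/p_i}$, so that the intermediate cubes $\K_{8r}(x^{(j)})$ stay inside the region where \eqref{HI} is applicable.
\end{itemize}
Since a maximum is bounded by a sum, we take
\[
n=1+\Big\lceil \frac{s-t}{\theta\rho^p}+\sum_i\Big[\frac{|x_i-y_i|}{(8\rho)^{1/p_i}}+\Big(\frac{|x_i-y_i|^{p_i}}{s-t}\Big)^{\frac1{p_i-1}}\Big]\Big\rceil .
\]
Then $C^{-n}$ has exactly the form of \eqref{sblb-monster}, after absorbing $\log C$, $\theta$ and numerical factors into $\gamma$. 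If needed, we rescale the exponent by replacing $C$ with $e$, since every term is only fixed up to constants.

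\emph{Main obstacle.} The hard step is justifying that every intermediate Harnack cylinder $\K_{8r}(x^{(j)})\times(t_j-\theta(8r)^p,\,t_j+\theta(8r)^p)$ is contained in $\Omega_T$. The radius $\rho$ is defined only through a condition at the endpoint $y$, and the cylinders near $(x,t)$ reach backward in time. I would first try to use the linear term $\sum_i|x_i-y_i|/(8\rho)^{1/p_i}$, together with $r\le\rho$, to keep the path inside an admissible neighbourhood. If this fails, I would reinterpret the hypothesis as requiring the whole segment to admit radius $\rho$, which is what is actually used in the proof of Theorem \ref{thm-sub}, where $\Omega_T=\R^{N+1}$.

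A secondary point is the anisotropic bookkeeping. Constraint (b) must be checked direction by direction, because the side lengths $r^{p/p_i}$ scale differently in each coordinate; this is why the exponent contains a sum over $i$ rather than a single isotropic term.
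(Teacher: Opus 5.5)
Your proposal is essentially the paper's proof. Both run a Harnack chain of $n$ equal steps along the straight segment from $(x,t)$ to $(y,s)$, impose the same three constraints on $n$ (time step below $\theta\rho^p$, the linear condition $|x_i-y_i|/n<(8\rho)^{1/p_i}$, and the paraboloid condition $\theta|x_i-y_i|^{p_i}\le n^{p_i-1}(s-t)$), take $n$ as their sum, and conclude $u(x,t)\le C^n u(y,s)$.

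Two points are glossed over in the same way as in the paper. First, the factor $\ln C$ multiplying the exponent cannot be absorbed into $\gamma$; replacing $C$ by $e$ is only legitimate if the exponent is read up to a data-dependent constant. Second, containment of the intermediate cylinders is not controlled by the linear term, which only limits step size. Your second option, assuming the whole segment admits radius $\rho$, is the one that works, and it is what the paper tacitly uses.
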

\begin{proof}
Given a generic point $(x,t)$, the right-hand side of the Harnack estimate \eqref{HI} is valid in the forward parabolids
\[
\P(x,t)^+= \bigg{\{} (z,\tau) \in 
\R^{N+1}:\quad |x_i-z_i|^{p_i} \theta \leq 
(\tau-t), \quad \tau \ge t \,,\, \forall i=1,\dots N \bigg{\}}\,.
\]
We assume $(x,t)=(0,0)$ and join the origin to the point $(y,s)$ by a straight line (see Figure \ref{figA}) and set the following sequence of points for $j=1,\dots n$, with $n \in \N$ to be determined,
\[
    x_n:=y,\qquad
    t_n:=s,\qquad
    x_j:=\frac{j}{n}\,x_n,\qquad
    t_j:=\frac{j}{n}\,t_n.
\]
See Figure \ref{figA} for an illustration. We will determine conditions on number $n \in \mathbb{N}$ for each segment joining $(x_j,t_j)$ to $(x_{j+1}, t_{j+1})$ to be contained in an appropriate paraboloid $\P(x_j,t_j)$. Hence, the conditions will be
\begin{equation}
    \begin{cases}
        |x_{j+1,i}-x_{j,i}|=\frac{|y_i|}{n} < (8\rho)^{\frac{1}{p_i}},\\ 
        \quad \\
        t_{j+1}-t_j = \frac{s}{n} < \theta \rho^p,\\ \quad \\
        \frac{|y_i|^{p_i}}{n^{p_i}} = |x_{j+1,i}-x_{j,i}|^{p_i} \leq \frac{t_{j+1}-t_j}{\theta}= \frac{s}{n \theta},
    \end{cases} \quad \Rightarrow \qquad 
    \begin{cases}
        n > |y_i|/(8\rho)^{1/p_i},\\ \quad \\
        n>s/(\theta \rho^p),\\ \quad \\
        n \ge \bigg( \frac{\theta |y_i|^{p_i}}{s}  \bigg)^{\frac{1}{p_i-1}}, \quad \forall i=1, \dots,N\,.
    \end{cases}
\end{equation} \noindent When the conditions on $n$ are respected, the Harnack inequality \eqref{HI} implies $u(x_j,t_j) \leq C u (x_{j+1}, t_{j+1})$ and 
\begin{equation} \label{chain} \frac{u(0,0)}{u(y,s)}=\frac{u(0,0)}{u(x_1,t_1)}\, \frac{u(x_1,t_1)}{u(x_2,t_2)}\,\dots \,  \frac{u(x_j, t_j)}{u(x_{j+1}, t_{j+1})}\, \dots \, \frac{u(x_{n-1}, t_{n-1})}{u(y,s)} \leq C^n\,.\end{equation} Hence, by choosing 
\begin{equation} \label{n} n= 1+ \frac{s}{\theta \rho^p}+ \sum_i \frac{|y_i|}{(8\rho)^{\frac{1}{p_i}}}+ \bigg( \frac{\theta |y_i|^{p_i}}{s} \bigg)^{\frac{1}{p_i-1}},\end{equation}
the conditions above are satisfied and, by taking  the logarithm 
\[\ln\left( \frac{u(0,0)}{u(y,s)}\right) \leq   \ln (C) \, n\,, \]
we finally obtain the claim. 
\noindent The use of the Harnack inequality in $\Omega_T$ is justified by the choice of $\rho$ while, in case $u$ solves the equation in the whole $\R^{N+1}$, we can choose $\rho= \infty$ in our estimate and recover \eqref{splb-clean}.
 
\begin{figure}
\centering
\begin{tikzpicture}[scale=0.4]
  % Axes
  \draw[->] (-8,0) -- (10,0) node[below right] {$x$};

  % Fill region ABOVE the paraboloid curve
  
\draw[->] (0,-0.5) -- (0,17) node[above] {$t$};
  % Paraboloid curve: t = 7|x|^3
  \draw[thick, blue] (-6,15) parabola bend (0,0) (6,15);
    
  % Label the curve
  \node[blue, right] at (6, 17) {$\P(x_o,t_o)$};

  % External point (x, t) outside the paraboloid
  \filldraw[black] (8, 9) circle (2pt) node[above right] {$(x, t)$};
 \node[black] at (0, -1) {$(x_o, t_o)$};

 \draw[thick, black] (0,0) -- (8,9);

 \node[black , right] at (8, 8) {$\ell$};

\filldraw[violet] (2.7, 3.1) circle (2pt);
\node[violet] at (4, 2.7) {$(x_1, t_1)$};

\draw[thick, violet] (-2.5,14.7) parabola bend (2.7, 3.1) (9,14.7);

 \node[violet , right] at (10, 13) {$\P(x_1,t_1)$};

\end{tikzpicture}

\caption{Scheme of the proof of the sub-potential lower bound \eqref{splb-clean}, by reaching a point $(x,t)$ by repeated application of \eqref{HI} in the paraboloids $\P(x_j,t_j)$.}\label{figA}
\end{figure}
\end{proof}

\noindent The above sub-potential lower-bound can be improved to a proper decay with respect to time.

\begin{corollary}
    Consider $u(x,t)$ a solution of \eqref{prototype}
    and
    $r>0$ a positive real.
    Then, there exists a number $A$, such that the following is respected.
    For every $x_0,x$ with $x\in K_r(x_0)\subset K_{8r}(x_0)\subset\Omega$ and every~$t\geq t_0$,
    \begin{equation}
        u(x,t)\geq \gamma^{-1}\left(\frac{t}{t_0}\right)^A u(x_0,t_0)\B(x,t,x_0,t_0).
    \end{equation}
\end{corollary}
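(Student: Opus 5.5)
We would prove the corollary by splitting the path from $(x_0,t_0)$ to $(x,t)$ into two parts. The first part is a purely temporal ascent at the fixed point $x_0$, done on a \emph{geometric} time scale; this produces the power $(t/t_0)^A$. The second part is a final spatial move, handled by the chain argument of Lemma \eqref{sblb-monster}, which produces $\B$. Throughout, times are measured from the origin, so $t_0>0$. As in Theorem \ref{thm-sub} and in the passage from \eqref{sblb-monster} to \eqref{splb-clean}, we read $\B$ up to multiplicative constants (depending on $\theta$ and the data) in the exponents of its summands. We apply the Harnack inequality \eqref{HI} in the whole space. Equivalently, we take $r$ so large that every cylinder we use lies inside $\Omega_T$; this is the only role of the hypothesis $K_{8r}(x_0)\subset\Omega$.

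\emph{Step 1: geometric time ascent.} Let $\tau:=t_0+\tfrac12(t-t_0)$ and $\tau_k:=2^k t_0$. Let $m$ be the largest integer with $\tau_m\le\tau$, so that $m\le \log_2(t/t_0)$. On each interval $[\tau_{k},\tau_{k+1}]$ we apply \eqref{HI} at the point $(x_0,\tau_k)$ with the radius $\rho_k$ defined by $\theta\rho_k^p=\tau_{k+1}-\tau_k=\tau_k$. This gives
\[
u(x_0,\tau_k)\le C\,u(x_0,\tau_{k+1}).
\]
A last application with radius $\theta\rho^p=\tau-\tau_m\le\tau_m$ reaches $(x_0,\tau)$. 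Multiplying these inequalities yields
\[
u(x_0,\tau)\ge C^{-(m+1)}u(x_0,t_0)\ge C^{-1}\Big(\frac{t}{t_0}\Big)^{-\log_2 C}u(x_0,t_0).
\]
Thus the claimed bound holds with $A:=-\log_2 C$, a negative exponent depending only on the data. The point of the geometric spacing is that it makes the number of Harnack steps logarithmic in $t/t_0$, instead of linear in $t-t_0$ as in the term $(s-t)/(\theta\rho^p)$ of \eqref{sblb-monster}.

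\emph{Step 2: spatial move.} From $(x_0,\tau)$ we reach $(x,t)$, with elapsed time $t-\tau=\tfrac12(t-t_0)$, by the straight-line chain of Lemma \eqref{sblb-monster}. Since we work in the whole space we may take $\rho=\infty$, so the terms $(s-t)/(\theta\rho^p)$ and $|x_i-y_i|/(8\rho)^{1/p_i}$ disappear. The number of steps is then
\[
n=1+\sum_i\Big(\frac{2\theta|x_i-x_{0,i}|^{p_i}}{t-t_0}\Big)^{\frac1{p_i-1}},
\]
which gives $u(x,t)\ge C^{-n}u(x_0,\tau)$. Up to the harmless factor $(2\theta)^{1/(p_i-1)}\ln C$ in each exponent, this is $\gamma^{-1}\B(x,t,x_0,t_0)$. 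Combining with Step 1 gives the corollary.

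\emph{Main obstacle.} The delicate point is keeping the constant in the exponent of $\B$ fixed. The half-split $\tau=t_0+\tfrac12(t-t_0)$ changes $t-s$ only by a factor $2$, and this is absorbed in the same way $\theta$ is absorbed in Theorem \ref{thm-sub}. If one insists on the exact constant, we would split at $\tau=t_0+\varepsilon(t-t_0)$ with $\varepsilon$ small, at the cost of enlarging $|A|$ only through the data.

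The case $t<2t_0$ is trivial: then $m=0$, Step 1 consists of a single Harnack step, and the bound reduces to \eqref{splb-clean}.
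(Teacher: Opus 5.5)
Your argument is correct and is essentially the paper's route: a Harnack ascent in time at the fixed point $x_0$ on a geometric time scale up to an intermediate time $\tau$, then the straight-line chain bound from $(x_0,\tau)$ to $(x,t)$, with $\B$ read up to constants in the exponents and \eqref{HI} used in the whole space, as the paper also implicitly does. The differences are only in bookkeeping. The paper uses ratio $1+\sigma$ with $\sigma=\theta r^p/t_0$ and $\tau=(1+\sigma)^{k-1}t_0$, so its $A=-\log_{1+\sigma}C$ and its factor comparing $\B(x,t,x_0,\tau)$ with $\B(x,t,x_0,t_0)$ depend on $r$ and $t_0$. Your doubling and midpoint split instead give $A=-\log_2 C$ depending only on the data, cost only a factor $2^{1/(p_i-1)}$ in the exponents, and also cover $t<(1+\sigma)t_0$ cleanly.
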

\begin{proof}
    We set 
    $r_*:=\max\{r:\ K_{8r}(x_0)\subset \Omega\}$, and
    $\sigma:=\frac{\theta r^p}{t_0}$.
    Then, consider
    \[
    k=\left\lfloor\log_{1+\sigma}\left(\frac{t}{t_0}\right)\right\rfloor, 
    \]
    in particular $k$ is the only integer number such that $(1+\sigma)^kt_0\leq t<(1+\sigma)^{k+1}t_0$. By \eqref{splb-clean}, 
    \[
    u(x,t)\geq \gamma^{-1}u(x_0,\tau)\B(x,t,x_0,\tau),
    \qquad
    \text{where }\tau:=(1+\sigma)^{k-1}t_0.
    \]
    Here we apply the Harnack 
    inequality \eqref{HI} iteratively
    to the right hand side. We set
    $t_j:=(1+\sigma)^jt_0$ for every $j=0,1,\dots,k-1$, 
    thus giving $\tau=t_k$.
    Then, we have
    $t_{j+1}=t_j+\sigma t_j\leq t_j+\theta r^p$, we respect the hypothesis
    of the inequality and we can state,
    \[
    u(x_0,\tau)\geq\left(\frac{1}{C}\right)^{k-1} u(x_0,t_0).
    \]
    We conclude by 
    setting
    $
    A:=-\log_{1+\sigma}(C)$.
    Indeed, we observe that
    \[
    \B(x,t,x_0,\tau)\geq \eta\,\B(x,t,x_0,t_0)\geq0
    \]
    for an opportune constant $\eta>0$,
    because $t-\tau \geq t\,\frac{\sigma}{1+\sigma}\geq (t-t_0)\frac{\sigma}{1+\sigma}$, and 
    moreover $k\leq \log_{1+\sigma}(t/t_0)$
    by definition, thus implying
    \[
    \left(\frac{1}{C}\right)^{k-1}\leq C\,(1+\sigma)^{A\cdot\log_{1+\sigma}(t/t_0)}=C\,\left(\frac{t}{t_0}\right)^A.
    \]
\end{proof}

\subsection{Construction of a Barenblatt profile}
We observe that the function $\B$ is self similar, hence its special form can be used to guess a good candidate for the expansion of positivity by comparison (see for instance \cite{CV-Barenblatt}, \cite{CV-Barenblatt}, \cite{CMV} or the original \cite{Barenblatt-book}), if only it would be a (sub)solution to \eqref{prototype}. This suggests a simple computation leading to the fact that the function 
\begin{equation}\label{eq_Bsol}
    \B(x,t)=D t^\alpha \exp\left(-\sum_ic_i{\left(\frac{|x_i|^{p_i}}{t}\right)}^{\frac{1}{p_i-1}}\right),
\end{equation}
is a solution to \eqref{prototype} for opportune choices

% of the coefficients $c_1,\,\dots,\, c_n$, and the exponent $\alpha$. We report the simple computation here below for the sake of completeness. Let's start by evaluating
% \begin{equation}\label{eq_defg}
% \partial_i\B(x,t)=\B(x,t)\left(-\frac{p_ic_i}{p_i-1}\left(\frac{|x_i|}{t}\right)^{\frac{1}{p_i-1}}\frac{x_i}{|x_i|}\right)=:
% \B(x,t)g_i(x,t),
% \end{equation}
% therefore we get
% \[
% \B(x,t)^{2-p_i}\,|\partial_i\B(x,t)|^{p_i-2}\,\partial_i\B(x,t)
% =-\B(x,t)\left(\frac{c_ip_i}{p_i-1}\right)^{p_i-1}\frac{x_i}{t}.
% \]
% We introduce 
% $\delta_i:=\left(\dfrac{c_ip_i}{p_i-1}\right)^{p_i-1}$ to ease notation, and also observe that
% \[
% \partial_t\B(x,t)=\B(x,t)\left(\frac{\alpha}{t}+\sum_i \frac{c_i}{p_i-1}\left(\frac{|x_i|}{t}\right)^{\frac{p_i}{p_i-1}}\right).
% \]
% Therefore $\B(x,t)$ is a solution
% of \eqref{prototype}
% if
% \[
% \frac{\alpha}{t}+\sum_i\frac{c_i}{p_i-1}\left(\frac{|x_i|}{t}\right)^{\frac{p_i}{p_i-1}}=
% -\sum_i\frac{\delta_i}{t}+\sum_i\delta_i^{\frac{p_i}{p_i-1}}\left(\frac{|x_i|}{t}\right)^{\frac{p_i}{p_i-1}}.
% \]
% Choosing for each $i=1, \dots, N$
\[
c_i\,:={\left({(p_i-1)^{p_i-1}}\cdot{p_i^{-p_i}}\right)}^{\frac{1}{p_i-1}} \qquad \text{and} \qquad 
\alpha\,\,=-\sum_i\delta_i\,.
\] 
% shows that $\B(x,t)$ is a solution to \eqref{prototype}.

\section{More General Structures} \label{generalization} \noindent The weak Harnack and Harnack estimates\eqref{Harnack2} and \eqref{HI} are a sole consequence of the energy inequalities of Lemma \ref{lem2.6}, and so are the sub-potential lower bounds \eqref{splb-clean}. Hence Theorems \ref{thm-WH} and \ref{thm-sub} apply, with {\it data} $=\{ N, p_i, K_1,K_2\}$ to parabolic operators of the kind of \begin{equation}\label{eq1.1}
u_{t}-\sum\limits_{i=1}^N D_i A_i (x, t, u, D u)=0, \quad \text{locally weakly in} \quad  \Omega_{T}:=\Omega \times (0, T).
\end{equation}
where $A_i(x,t,s,\xi):\Omega_{T}\times \mathbb{R}_+\times \mathbb{R}^{N} \rightarrow \mathbb{R}^{N}$ are Carath\'eodory functions, satisfying 
\begin{equation}\label{eq1.2}
\begin{cases}
 A_i(x, t, s, \xi)\, \xi_i  \geqslant  K_{1}\, s^{2-p_i}\,|\xi_i|^{p_i},\qquad \forall (s,\xi) \in \R_+ \times \R^N,\\
 \\
|A_i(x, t, s,  \xi)| \leqslant  K_{2}\, s^{2-p_i} |\xi_i|^{p_i-1},\qquad  \forall \, i=1, ...,N,
\end{cases}
\end{equation}
for positive constants $K_1,K_2$. Such operators do not necessarily satisfy the comparison principle.

\section*{Acknowledgements}

\small
\noindent 
The authors thank Eugenio Vecchi for suggestions on the construction of the Barenblatt profile. S.C.~acknowledges GNAMPA (INdAM) fundings and the support of the department of Mathematics of Bologna.

\end{document}